\newtheorem{thm}{Theorem}[section]
\newtheorem{cor}[thm]{Corollary}
\newtheorem{example}[thm]{Example}
\newtheorem{lem}[thm]{Lemma}
\newtheorem{prop}[thm]{Proposition}
\theoremstyle{definition}
\newtheorem{defn}[thm]{Definition}
\theoremstyle{remark}
\newtheorem{rem}[thm]{Remark}
\numberwithin{equation}{section}
\title{On the structure of the  ring of   integer-valued entire functions}
\author{\small Bechir Amri \\
\small bechiramri@69gmail.com, \\
\small University of Carthage, Department of Mathematics, Faculty of Sciences of Bizerte, Tunisia.  }
\date{}
\begin{document}
\maketitle
  \begin{abstract}
The present paper studies structure of the ring of integer-valued entire functions.  We characterize certain classes   of  prime and maximal ideals  and investigate some of their properties.
\end{abstract}
\section{Introduction}
The study of  algebraic and topological structures of rings of functions has been a subject of interest for many years.  In particular,   rings of   continuous functions on a topological spaces  and  of  holomorphic functions  have fairly rich structures,   especially in   characterization of prime and maximal ideals. A pioneer papers in this  subject are \cite{HM,EH,LG}.
\par This paper study  the  ring $\mathfrak{A}$ of    integer-valued entire functions,  that  is  the set of
 entire function in the complex plane and  takes values in $\mathbb{Z}$ at all points in $\mathbb{Z}$.  It has  been inspired and motivated by the study of the  ring of entire functions,  presented in the papers \cite{OH, MH}. Although they have similar properties,  the structure  of  ring $\mathfrak{A}$   is   quite  different: its not a Bezout ring and has  more classes of prime and  maximal  ideals.  However,  the ring $\mathfrak{U}$ is not similar in structure to that of  integer-valued polynomials, for example an integer-valued entire function is not necessarily continuous for p-adic metric, an interesting argument to classify maximal ideals. The ideals of $\mathfrak{A}$ are herein classified as in \cite{EH},    free and fixed,  according to the intersection of the sets of irreducible devisors of its elements is   EmptySet or not. We first  establish  necessary and sufficiently    condition  for finitely generated ideal to be principal. A such condition will be frequently invoked when studying the problem of identifying prime and maximal ideals.
\par
  \section{ Elementary properties of $\mathfrak{A}$  }
 Let us start with the following  lemma,
\begin{lem}\label{l1}
  For a given sequence $(a_k)_k$ of complex numbers, there exists an entire function $f$ such that for all $k\in \mathbb{Z}$,
 $ f(k)=a_k$.
\end{lem}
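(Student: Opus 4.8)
The plan is to construct $f$ explicitly as an interpolation series of Mittag--Leffler type, using $\sin$ to pin down the zero set. Put $G(z)=\dfrac{\sin \pi z}{\pi}$; this is entire, has a simple zero at each integer and no other zeros, and $G'(z)=\cos\pi z$, so $G'(k)=(-1)^k$ and in particular $|G'(k)|=1$ for every $k\in\mathbb Z$. For each fixed $k$ the quotient $G(z)/(z-k)$ extends to an entire function whose value at $z=k$ is $G'(k)$ and which vanishes at every other integer. The naive candidate $\sum_k \frac{a_k}{G'(k)}\,\frac{G(z)}{z-k}$ would interpolate the data if it converged, but it need not, so I would insert convergence factors and set
\[
 f(z)=a_0\,\frac{G(z)}{z}\;+\;\sum_{k\neq 0}\frac{a_k}{G'(k)}\,\frac{G(z)}{z-k}\left(\frac{z}{k}\right)^{n_k},
\]
where the nonnegative integers $n_k$ are still to be chosen. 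Each summand is entire; each one vanishes at every integer $j\neq k$ because the factor $G(z)/(z-k)$ already does, regardless of the correction $(z/k)^{n_k}$; and the $k$-th summand takes the value $\frac{a_k}{G'(k)}G'(k)(k/k)^{n_k}=a_k$ at $z=k$. Likewise the term $a_0 G(z)/z$ equals $a_0$ at $0$ and vanishes at every other integer. Hence, once locally uniform convergence is in hand so that one may evaluate term by term, $f(j)=a_j$ for all $j\in\mathbb Z$.

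The remaining—and main—point is to pick $n_k$ so that the series converges locally uniformly; then $f$ is entire by Weierstrass' theorem on uniform limits of holomorphic functions. Fix a compact set $K\subset\mathbb C$ and set $R=\sup_{z\in K}|z|$ and $M=\sup_{z\in K}|G(z)|$. For $|k|\ge 2R$ one has $|z-k|\ge |k|/2$ and $|z/k|\le 1/2$ on $K$, so the $k$-th term is bounded on $K$ by
\[
 \frac{|a_k|}{|G'(k)|}\cdot\frac{M}{|k|/2}\cdot 2^{-n_k}=\frac{4M\,|a_k|}{|k|}\,2^{-n_k}.
\]
Choosing $n_k$ large enough that $\dfrac{|a_k|}{|k|}\,2^{-n_k}\le 2^{-|k|}$—for instance $n_k=\bigl\lceil |k|+\log_2(1+|a_k|)\bigr\rceil$—makes the tail of the series dominated on $K$ by $4M\sum_{|k|\ge 2R}2^{-|k|}<\infty$, uniformly in $z$, while the finitely many terms with $|k|<2R$ are entire. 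Since $K$ was arbitrary, the series defines an entire function $f$ with the required values.

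I expect the only delicate part to be the bookkeeping for the convergence factors: one has to check simultaneously that $(z/k)^{n_k}$ does not spoil the interpolation at the other integers (it does not, since $G(z)/(z-k)$ vanishes there anyway), that it equals $1$ at $z=k$, and that $n_k$ can be chosen large enough to defeat the—possibly very rapid—growth of $(a_k)$ while keeping the series summable on every compact set. Everything else is routine. One could alternatively phrase the same construction as an application of the Mittag--Leffler theorem to the prescribed principal parts $(-1)^k a_k/(z-k)$ followed by multiplication by $G$, but the explicit series above keeps the proof self-contained.
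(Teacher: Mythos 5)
Your proof is correct and follows essentially the same strategy as the paper: an explicit interpolation series built from $\sin\pi z$, where each term vanishes at every integer except $k$, equals $a_k$ at $k$, and carries a convergence factor whose exponent grows like $|k|$ plus a quantity controlling $|a_k|$, giving normal convergence on compacta. The only cosmetic difference is that the paper raises the cardinal sine $\frac{\sin\pi(z-k)}{\pi(z-k)}$ to the power $|k|+[|a_k|]$, whereas you keep a single such factor and multiply by $(z/k)^{n_k}$ with $n_k\approx |k|+\log_2(1+|a_k|)$; both yield the same estimate and the same conclusion.
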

  This can seen  by virtue of Weierstrass  theorem  or directly  by taking the following function
   $$f(z)=\sum_{k\in \mathbb{Z}}a_k\left(\frac{ \sin(\pi (z-\pi))}{\pi(z-k)}\right)^{|k|+[|a_k|]}=\sum_{k\in \mathbb{Z}}u_k(z)$$
where $[x]$ denotes  the integer part of a real number $x$. The series is normally convergent in every compact set. In fact,  for   $M>0$ and  $|z|\leq M$
we have $|\sin\pi z|\leq e^{\pi M}$   and    $\pi|z-k|\geq 2e^{\pi M}$,  for  $|k|$ big enough.  Then
$$|u_k(z)|\leq \frac{|a_k|}{2^{|k|+[|a_k|]}}\leq \frac{1}{2^{|k|-1}}.$$
\begin{cor}\label{cor2.2}
For a given sequence $(a_k)_k$ of nonzero complex numbers, there exists a non-vanishing  entire function  such that
 $ f(k)=a_k$, for all $k\in \mathbb{Z}$.
\end{cor}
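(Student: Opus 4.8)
The plan is to reduce the statement to Lemma~\ref{l1} by passing through the exponential map. Recall the standard fact that an entire function on $\mathbb{C}$ which never vanishes is precisely one of the form $e^{g}$ with $g$ entire: indeed $\mathbb{C}$ is simply connected, so the logarithmic derivative $f'/f$ of a non-vanishing entire $f$ has an entire primitive $g$, and then $f e^{-g}$ is a nonzero constant, which can be absorbed into $g$. Consequently, to build a non-vanishing entire $f$ with $f(k)=a_k$ it suffices to produce an entire function $g$ with $e^{g(k)}=a_k$ for every $k\in\mathbb{Z}$.

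With that reduction in hand the argument is immediate. Since each $a_k$ is nonzero, choose for every $k\in\mathbb{Z}$ a complex number $b_k$ with $e^{b_k}=a_k$ (for instance $b_k=\log|a_k|+i\theta_k$ for any choice of argument $\theta_k$ of $a_k$). Apply Lemma~\ref{l1} to the sequence $(b_k)_k$ to obtain an entire function $g$ with $g(k)=b_k$ for all $k$. Then $f:=e^{g}$ is entire and non-vanishing, and $f(k)=e^{b_k}=a_k$ for all $k\in\mathbb{Z}$, as required.

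There is essentially no obstacle here; the only point to keep in mind is that Lemma~\ref{l1} must be invoked in its full generality, since the chosen logarithms $b_k$ need not be integers (nor even real) even when the $a_k$ are — but this is exactly the generality in which the lemma is stated. One could alternatively try to modify directly the interpolating function produced by Lemma~\ref{l1}, dividing out its zeros by a Weierstrass product and then correcting the values at the integers; that approach is clumsier, because the correcting factor would itself have to be non-vanishing, whereas the passage through $e^{g}$ handles the non-vanishing requirement automatically.
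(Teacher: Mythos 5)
Your proof is correct and follows the same route as the paper: pick logarithms $b_k$ with $e^{b_k}=a_k$, interpolate them by Lemma~\ref{l1}, and exponentiate. The preliminary remark that every non-vanishing entire function is of the form $e^{g}$ is true but not needed for this direction of the argument; the construction alone suffices, exactly as in the paper.
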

\begin{proof}
  As the exponential function  is surjective we let  $ a_k= e^{z_k}$,  $k\in \mathbb{Z}$. From the above lemma there exists   an entire function  $f$ such that  $ f(k)=z_k$, for all $k\in \mathbb{Z}$. Hence    a  desired function is given by $g(z)=e^{f(z)}$
\end{proof}
In what follows we denote by $\mathfrak{ A}$  the ring of  integer-valued entire functions,  that  is  the set of
entire  functions $f$  which  satisfy $f(\mathbb{Z})\subset  \mathbb{Z}$.
   From the isolated zeros  property   the  ring  $\mathfrak{ A}$ is an integral domain. However,   $f\in \mathfrak{ A}$ is invertible   if and only if   it is non-vanishing function with  $f(k)\in \{-1,1\}$, $k\in \mathbb{Z}$.
  \begin{prop}\label{p1}
The irreducible elements of $\mathfrak{ A}$ are functions $f$ that satisfy one of the following  conditions:\\
(i) $f$ does not vanish and there exists only one $n\in \mathbb{Z} $ such that $f(n)$ is a prime  integer  and $f(k)\in\{1,-1\}$ for all $k\in \mathbb{Z}$, $k\neq n$.\\
(ii) $f$   vanishes only  at one complex number $a\in \mathbb{C}\setminus\mathbb{Z}$ which is a simple zero and $f(\mathbb{Z})\subset\{1,-1\}$
\end{prop}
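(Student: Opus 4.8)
The plan is to prove both implications.

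For the ``if'' direction, suppose $f$ is of type (i) or (ii) and write $f = gh$ in $\mathfrak{A}$. If $f$ is of type (i) it is zero-free, hence so are $g$ and $h$; evaluating at the distinguished integer $n$ gives $g(n)h(n) = f(n) = \pm p$ with $p$ prime, so after relabeling $|g(n)| = 1$, while for every $k \neq n$ the identity $g(k)h(k) = f(k) \in \{1,-1\}$ forces $|g(k)| = 1$. Thus $g$ is a zero-free element of $\mathfrak{A}$ with $g(\mathbb{Z}) \subseteq \{-1,1\}$, i.e. a unit. If $f$ is of type (ii), its unique zero $a$ is simple and lies off $\mathbb{Z}$, so it belongs to exactly one of the factors, say $g$, and $h$ is zero-free; evaluating on $\mathbb{Z}$, where $f = \pm 1$, again forces $h(\mathbb{Z}) \subseteq \{-1,1\}$, so $h$ is a unit. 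In either case $f$ itself is a non-unit (it is zero-free with $|f(n)| \ge 2$, or it vanishes), so $f$ is irreducible.

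For the converse, let $f$ be irreducible, so $f \neq 0$ and $f$ is not a unit. The engine of the argument is Lemma \ref{l1} together with Corollary \ref{cor2.2}: one can realize any prescription of nonzero complex values at the integers by a zero-free element of $\mathfrak{A}$, and more generally prescribe, in addition, a single simple zero at a point of $\mathbb{C}\setminus\mathbb{Z}$ (or at an integer, where the value is then $0$). Each time I will produce a nontrivial factorization $f = gh$ by letting $g$ carry part of the data of $f$ and setting $h = f/g$; the points to check are always that $g$ is zero-free away from its prescribed zero so that $h$ is entire, that the prescribed values keep both $g$ and $h$ integer-valued, and that neither factor is a unit. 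Carrying this out: (1) if $f(k_0) = 0$ for some $k_0 \in \mathbb{Z}$, take $g(z) = (z-k_0)e^{\gamma(z)}$ with $\gamma$ chosen by Lemma \ref{l1} so that $g(k) = 1$ for all $k \neq k_0$ and, when $k_0$ is a simple zero of $f$, so that the cofactor $h = f/g$ satisfies $h(k_0) = 2$; then $g$ vanishes and $h$ is a non-unit (it equals $0$ or $2$ at $k_0$), contradicting irreducibility, so $f$ is zero-free on $\mathbb{Z}$. (2) If $f$ has at least two zeros counted with multiplicity, all necessarily off $\mathbb{Z}$, peel off one simple zero $a$ via $g(z) = (z-a)e^{\gamma(z)}$ with $g \equiv 1$ on $\mathbb{Z}$; then $h = f/g \in \mathfrak{A}$ still vanishes, a contradiction, so $f$ has at most one zero, simple and off $\mathbb{Z}$. (3) If $f$ is zero-free but $|f(m)| \ge 2$ for two distinct integers $m_1 \neq m_2$, use Corollary \ref{cor2.2} to put $g(m_1) = f(m_1)$ and $g \equiv 1$ elsewhere on $\mathbb{Z}$; then $h = f/g$ has $|h(m_2)| \ge 2$ and both factors are non-units, a contradiction; and since $f$ is not a unit at least one integer $n$ has $|f(n)| \ge 2$, so there is exactly one. (4) If that value $f(n)$ is composite, write $|f(n)| = ab$ with $a,b \ge 2$ and split it as $g(n) = a$, $g \equiv 1$ elsewhere, so $h(n) = \pm b$, again a contradiction; hence $f(n)$ is $\pm$ a prime. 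A zero-free irreducible $f$ is thus of type (i), and a vanishing one of type (ii).

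The main obstacle is the bookkeeping in step (1), and to a lesser extent in the other splittings: one must simultaneously guarantee $g \in \mathfrak{A}$, $h = f/g$ entire and integer-valued, and that neither factor is a unit. The decisive observation is that the single still-free value of $g$ at $k_0$ (respectively, the freedom in the off-$\mathbb{Z}$ interpolation data) can be spent to force $|h(k_0)| \ge 2$; verifying that the resulting interpolation constraints are all consistent and covered by Lemma \ref{l1} --- in particular in the delicate case where $f$ has a simple zero at an integer and equals $\pm 1$ at every other integer --- is where the real content of the argument lies.
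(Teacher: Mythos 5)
Your overall strategy is the same as the paper's: use Lemma \ref{l1} and Corollary \ref{cor2.2} to manufacture a factor $g$ carrying part of the data of $f$, and check that $g$ and $f/g$ both lie in $\mathfrak{A}$ and are non-units. The ``if'' direction and the splittings in your steps (1)--(4) are correct as set up. (Your step (1) is more elaborate than necessary: instead of extracting the linear factor $(z-k_0)$ and tuning $e^{\gamma(k_0)}$ so that $(f/g)(k_0)=2$, one can simply divide by a zero-free $g$ with $g(k_0)=2$ and $g(k)=1$ for $k\neq k_0$, so that $(f/g)(k_0)=0$; this is what the paper does. But your version is valid.)

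There is, however, a genuine gap at your final sentence. Steps (3) and (4) are explicitly restricted to zero-free $f$, so for a vanishing irreducible $f$ you have only established that it has exactly one simple zero $a\in\mathbb{C}\setminus\mathbb{Z}$; you have nowhere shown that $f(\mathbb{Z})\subset\{1,-1\}$, which is part of condition (ii). The configuration still to be excluded is: one simple zero off $\mathbb{Z}$ together with some integer $n$ with $|f(n)|\ge 2$. Such an $f$ is reducible, and your own step (2) device disposes of it in one line: take $g=(z-a)e^{\gamma}$ with $g\equiv 1$ on $\mathbb{Z}$; then $g\in\mathfrak{A}$ is a non-unit, and $h=f/g\in\mathfrak{A}$ is zero-free with $|h(n)|=|f(n)|\ge 2$, hence also a non-unit, contradicting irreducibility. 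With this extra case the converse is complete. (The paper's write-up is equally terse at exactly this point --- it concludes ``by virtue of the above discussions'' --- but the single-zero factor $g=(z-a)h$ it constructs is precisely the tool that covers this mixed case, and your proof should state it explicitly.)
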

\begin{proof}
It is not hard to verify  that a function of $\mathfrak{ A}$  that  satisfying i) or ii) is irreducible. Conversely,
   let  $f$  be an irreducible element  of $\mathfrak{ A}$. Then for all $n\in \mathbb{Z}$,  $f(n)$  is irreducible or invertible in $\mathbb{Z}$. In fact, suppose that for some $n\in \mathbb{Z}$,
  $f(n)= rs $ with  $r,s \in \mathbb{Z}$,  $r\neq 0$ and $s\notin\{-1,1\}$.   Let $g\in \mathfrak{ A}$ be  a none-vanishing function such that $g(n)=r$ and $g(k)=1$, for all $k\in \mathbb{Z}$, $k\neq n$.
 Then  $g$ divides $f$ and $f/g$ is  not invertible since $f/g(n)=s$,   this   contradicts  the fact that $f$ is irreducible.
 Furthermore, we exclude the case  where   there are two distinct integers  $n_1, n_2  \in \mathbb{Z}$  such that
  $f(n_1)$ and $f(n_2)$ are irreducible  integers,  since   a none-vanishing function  $g\in \mathfrak{ A}$   such that $g(n_1)=f(n_1)$ and $g(k)=1$, for all $k\in \mathbb{Z}$, $k\neq n_1$ divides $f$. Now if   $f(\mathbb{Z})\subset \{-1,1\} $  then $f$ vanishes on $\mathbb{C}\setminus\mathbb{Z}$ since otherwise  $f$ is   invertible. Suppose that $f(a)=f(b )=0$  for  two distinct complex numbers   $a,b \in \mathbb{C}\setminus\mathbb{Z}$. Taking  a none vanishing  $h\in\mathfrak{ A}$ such that $h(k)=1/(k-a)$ for all $k\in \mathbb{Z}$, then the function $g(z)= (z-a)h(z)$ divides $f$. Also  $g$ divides    $f$  in the case  $f(a)=f'(a)=0$.   Now, by virtue of the above discussions the function $f$ must satisfy    condition    i) or ii).
  \end{proof}
The following corollary is an immediate consequence.
\begin{cor}
The prime elements of  $\mathfrak{ A}$ are exactly the irreducible elements.
\end{cor}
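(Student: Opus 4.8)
The plan is to use the standard fact that in any integral domain a prime element is automatically irreducible, so that only the reverse implication needs an argument, and to extract that implication directly from the explicit list of irreducible elements furnished by Proposition~\ref{p1}. Thus let $f$ be irreducible and suppose $f\mid gh$ with $g,h\in\mathfrak{A}$, say $gh=fq$ for some $q\in\mathfrak{A}$; the goal is to show $f\mid g$ or $f\mid h$. Since the conclusion is symmetric in $g$ and $h$, I may interchange them freely.

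First I would handle case (i) of Proposition~\ref{p1}: $f$ is non-vanishing, $f(n)=p$ is a prime integer for a unique $n\in\mathbb{Z}$, and $f(k)\in\{1,-1\}$ for every other integer $k$. Evaluating $gh=fq$ at $n$ gives $p\mid g(n)h(n)$ in $\mathbb{Z}$, whence $p\mid g(n)$ or $p\mid h(n)$; say $p\mid g(n)$. I then claim $g/f\in\mathfrak{A}$: it is entire because $f$ never vanishes, and for $k\in\mathbb{Z}$ one has $(g/f)(k)=\pm g(k)\in\mathbb{Z}$ when $k\neq n$, while $(g/f)(n)=g(n)/p\in\mathbb{Z}$ by the divisibility just obtained. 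Hence $g/f\in\mathfrak{A}$ and $f\mid g$.

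Next I would treat case (ii): $f$ vanishes only at a point $a\in\mathbb{C}\setminus\mathbb{Z}$, which is a simple zero, and $f(\mathbb{Z})\subset\{1,-1\}$. Evaluating $gh=fq$ at $a$ gives $g(a)h(a)=0$, so, after possibly swapping $g$ and $h$, $g(a)=0$. Because $f$ has a simple zero at $a$ and no other zeros, the quotient $g/f$ has a removable singularity at $a$ and is holomorphic on all of $\mathbb{C}$; moreover $(g/f)(k)=\pm g(k)\in\mathbb{Z}$ for every $k\in\mathbb{Z}$ since $f(k)\in\{1,-1\}$. Thus $g/f\in\mathfrak{A}$ and $f\mid g$.

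Since by Proposition~\ref{p1} these two cases exhaust all irreducible elements, every irreducible element of $\mathfrak{A}$ is prime, and combined with the trivial implication this proves the corollary. I do not expect a genuine obstacle here; the only point requiring a moment's care is the holomorphy of $g/f$ in case (ii), where it is essential both that the zero of $f$ at $a$ be simple — so that the vanishing of $g$ at $a$ already suffices to cancel it — and that $f$ have no zeros elsewhere in $\mathbb{C}$.
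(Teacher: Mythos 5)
Your argument is correct and is exactly the verification the paper leaves implicit: the paper states the corollary as an immediate consequence of Proposition~\ref{p1}, and your case-by-case check (evaluating $gh=fq$ at the distinguished integer $n$ in case (i), respectively at the simple zero $a$ in case (ii), and then verifying $g/f\in\mathfrak{A}$) is the intended way to see that each irreducible on that list is prime, the converse being standard in any integral domain.
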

\begin{prop}
 $\mathfrak{ A}$   is  not a factorial ring.
\end{prop}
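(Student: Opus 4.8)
The plan is to leverage the corollary just obtained, which asserts that in $\mathfrak{A}$ every irreducible element is prime. In an integral domain this has a standard consequence: the ring is factorial \emph{if and only if} it is atomic, i.e.\ every non-zero non-unit is a finite product of irreducibles. (Existence of a factorization is precisely atomicity; once irreducibles are prime, the usual cancellation induction forces any such factorization to be unique up to order and units.) So, to prove that $\mathfrak{A}$ is not factorial, it suffices to exhibit a single non-zero non-unit that admits \emph{no} factorization into irreducibles; genuine non-uniqueness is not available to us and need not be sought.

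First I would record the shape of irreducibles from Proposition~\ref{p1}: by (i) and (ii), an irreducible $f\in\mathfrak{A}$ vanishes at most at one point of $\mathbb{C}$, that zero (when present) is simple, and it never lies at an integer; moreover, if $f$ does not vanish then $f(k)\in\{-1,1\}$ for all $k\in\mathbb{Z}$ with at most one exception. Next I would take the witness $g(z)=\sin(\pi z)$. It is entire with $g(\mathbb{Z})=\{0\}\subset\mathbb{Z}$, hence $g\in\mathfrak{A}$, and it vanishes, hence it is not a unit. Suppose for contradiction that $g=f_{1}f_{2}\cdots f_{n}$ with each $f_{i}$ irreducible. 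Counting zeros with multiplicity, the left-hand side has infinitely many zeros, namely all of $\mathbb{Z}$, each simple; but by the description above the right-hand side has at most $n$ zeros, none of them an integer. This contradiction shows $g$ is not a product of irreducibles, so $\mathfrak{A}$ is not atomic and therefore not factorial.

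I would close with a remark that this failure is pervasive and not tied to the presence of zeros: any irreducible divisor of a non-vanishing element must be of type (i), so it contributes a value outside $\{-1,1\}$ at only one integer; consequently a non-vanishing $f\in\mathfrak{A}$ with $f(n)\notin\{-1,1\}$ for infinitely many $n$ — for instance the constant function $2$, or, by Corollary~\ref{cor2.2}, a non-vanishing function whose values at $\mathbb{Z}$ are pairwise distinct primes — likewise has no factorization into irreducibles. As for difficulty: there is essentially no computational obstacle here; the one real decision is the conceptual reduction in the first paragraph, namely recognizing that after the corollary the only route to ``not factorial'' is ``not atomic,'' after which any of several explicit examples closes the argument by a one-line zero- or value-count.
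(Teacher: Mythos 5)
Your proof is correct, but it takes a genuinely different route from the paper's. The paper keeps the witness $f(z)=z$ and argues via an overabundance of \emph{prime divisors}: for each $n\in\mathbb{Z}$ and each prime $p\mid n$, the non-vanishing function $g$ with $g(n)=p$ and $g(k)=1$ for $k\neq n$ is a prime divisor of $f$, and these are pairwise non-associate, whereas in a factorial ring a nonzero element has only finitely many prime divisors up to units. You instead attack \emph{atomicity}: using the classification in Proposition~\ref{p1}, an irreducible vanishes at most at one point and never at an integer, so a finite product of irreducibles cannot equal $\sin(\pi z)$ (infinitely many integer zeros), nor, by the value count at integers, the constant $2$; hence no factorization into irreducibles exists at all. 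Both arguments are sound and of comparable length; note that the paper's witness $z$ would also serve your purpose (it vanishes at $0\in\mathbb{Z}$), and conversely your framing makes explicit the slightly stronger fact that $\mathfrak{A}$ fails atomicity, not merely uniqueness, while the paper's argument leans only on the easy ``adjust one value'' construction plus the corollary that irreducibles are prime rather than on the full shape of irreducibles. Your preliminary reduction (``not factorial'' must mean ``not atomic'' once irreducibles are prime) is correct but not strictly needed, since exhibiting an element with no irreducible factorization contradicts factoriality by definition.
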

\begin{proof}
The function $f(z)=z$ has an infinite number of prime divisors,  since   for  $n\in \mathbb{Z}$ and $p$ a prime divisor of $n$ then the none vanishing  function $g\in  \mathfrak{ A}$  such that $g(n)=p$ and $g(k)=1$ for $k\neq n$ is a prime divisor of $f$.
\end{proof}
 \begin{prop}
  $\mathfrak{ A}$ is not a   Noetherian ring.
\end{prop}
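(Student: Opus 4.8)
The plan is to contradict the ascending chain condition directly, by exhibiting an infinite strictly increasing chain of ideals of $\mathfrak{A}$. The only construction tool needed is Lemma \ref{l1}: it produces elements of $\mathfrak{A}$ with arbitrarily prescribed integer values on $\mathbb{Z}$, and since the chain I have in mind is defined purely in terms of values at integers, I never have to control where the functions vanish off $\mathbb{Z}$.

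Concretely, for $n \in \mathbb{N}$ put
\[
  I_n = \{\, f \in \mathfrak{A} : f(k) = 0 \text{ for every } k \in \mathbb{Z} \text{ with } k \geq n \,\}.
\]
First I would check that each $I_n$ is a proper ideal: if $f,g \in I_n$ and $h \in \mathfrak{A}$, then for every integer $k \geq n$ we have $(f+g)(k) = 0$ and $(hf)(k) = h(k)\cdot 0 = 0$, so $f+g, hf \in I_n$; and the constant function $1$ lies in no $I_n$. Moreover $I_n \subseteq I_{n+1}$, since vanishing at every integer $\ge n$ in particular forces vanishing at every integer $\ge n+1$.

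It remains to show the inclusions are strict. By Lemma \ref{l1} there is an entire function $g_n$ with $g_n(n) = 1$, with $g_n(k) = 0$ for all integers $k \geq n+1$, and (say) $g_n(k) = 1$ for all integers $k < n$; being integer-valued on $\mathbb{Z}$, it lies in $\mathfrak{A}$. Then $g_n \in I_{n+1}$ while $g_n \notin I_n$ because $g_n(n) = 1 \neq 0$. Hence
\[
  I_0 \subsetneq I_1 \subsetneq I_2 \subsetneq \cdots
\]
is a strictly increasing chain of ideals, so $\mathfrak{A}$ is not Noetherian. (Equivalently, one may package the same example as a single non-finitely-generated ideal $I = \bigcup_n I_n = \{\, f \in \mathfrak{A} : f \text{ vanishes at all but finitely many integers } k \geq 0 \,\}$: any finite generating set would be contained in some $I_N$ and could therefore generate at most $I_N \subsetneq I$.) There is no real obstacle in this argument; the only points that need a word of care are that Lemma \ref{l1} genuinely outputs an element of $\mathfrak{A}$ — which it does, the prescribed values being integers — and that the chain never stabilizes, which is exactly what the witnesses $g_n$ guarantee.
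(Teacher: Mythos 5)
Your argument is correct. You build the chain $I_n=\{f\in\mathfrak{A}: f(k)=0 \text{ for all integers } k\geq n\}$, check it is an ascending chain of ideals, and use Lemma \ref{l1} to interpolate integer values $g_n(n)=1$, $g_n(k)=0$ for $k\geq n+1$, which indeed yields $g_n\in I_{n+1}\setminus I_n$; so the chain is strictly increasing and the ascending chain condition fails. The paper's proof has the same overall shape (exhibit a non-stabilizing ascending chain) but realizes it differently: it takes the principal ideals generated by $H_n(z)=\prod_{k=n}^{\infty}\bigl(1-\sin^2(\pi z)/k^2\bigr)$, so that $H_n=(1-\sin^2(\pi z)/n^2)H_{n+1}$ gives $(H_n)\subsetneq(H_{n+1})$, the strictness coming from zeros of $H_n$ off $\mathbb{Z}$ that $H_{n+1}$ does not have. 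In other words, the paper's chain is a divisibility chain of principal ideals built from an explicit infinite product (and needs the convergence of that product and the fact that $H_n(m)=1$ for $m\in\mathbb{Z}$), while yours is a chain of ``fixed'' ideals defined by vanishing conditions at integers, requiring only the interpolation Lemma \ref{l1} already established; your route is arguably more elementary and also packages naturally as a single non-finitely-generated ideal, whereas the paper's version stays inside the class of principal ideals, which meshes with its later divisibility-theoretic analysis.
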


\begin{proof}
  Consider the the sequence $(H_n)_n$ of $\mathfrak{ A}$,
  $$H_n(z)=\prod_{k=n}^{+\infty}\left(1-\frac{\sin^2(\pi z)}{k^2}\right).$$
  The ascending chains of ideals $(H_n)_n$ is not stable.
\end{proof}
 For $f$ is an entire function, we let $Z(f)$ the algebraic set of its zeros (  each zero occurring a number of times equal
to its multiplicity )
 \begin{lem}\label{l2}
   If $f,g\in \mathfrak{A} $ such that $Z(f)\cap Z(g)=\emptyset $ and $gcd(f(n),g(n))=1$   for all $n\in \mathbb{Z}$, then there  are $u,v\in \mathfrak{A}$ such that
   $uf+vg=1$.
 \end{lem}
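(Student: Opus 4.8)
The idea is to first solve the Bézout equation in the larger ring of \emph{all} entire functions, where it is easy, and then to correct the solution so that it becomes integer-valued at the integers, the correction being achieved by an appeal to Lemma~\ref{l1}.

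\emph{Step 1: a Bézout relation over all entire functions.} Denote by $\mathcal{O}$ the ring of all entire functions. Since $Z(f)\cap Z(g)=\emptyset$, the functions $f$ and $g$ have no common zero, so their greatest common divisor in $\mathcal{O}$ is a unit; as $\mathcal{O}$ is a Bézout domain (this can be seen by a Weierstrass--Mittag-Leffler construction, in the same spirit as Lemma~\ref{l1}: choose $a\in\mathcal{O}$ with $1-af$ vanishing at each zero of $g$ to sufficiently high order, and set $b=(1-af)/g$), there exist $a,b\in\mathcal{O}$ with $af+bg=1$. These $a,b$ need not lie in $\mathfrak{A}$ --- repairing that is the point of the argument.

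\emph{Step 2: reduction to a pointwise problem.} The general solution of $uf+vg=1$ over $\mathcal{O}$ is $u=a+hg$, $v=b-hf$ with $h\in\mathcal{O}$ arbitrary, the $h$-terms cancelling identically. So it suffices to find an entire $h$ with $a(n)+h(n)g(n)\in\mathbb{Z}$ and $b(n)-h(n)f(n)\in\mathbb{Z}$ for every $n\in\mathbb{Z}$; then $u=a+hg$ and $v=b-hf$ are in $\mathfrak{A}$ and satisfy $uf+vg=1$. By Lemma~\ref{l1}, $h$ may be built from any prescribed sequence of values $h(n)$, so the task is purely pointwise: for each fixed $n$, produce a complex number $t=h(n)$ with $a(n)+t\,g(n)\in\mathbb{Z}$ and $b(n)-t\,f(n)\in\mathbb{Z}$.

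\emph{Step 3: solving the pointwise problem.} Fix $n$ and write $x=f(n)$, $y=g(n)$ (integers, coprime, not both zero since $Z(f)\cap Z(g)=\emptyset$) and $\alpha=a(n)$, $\beta=b(n)\in\mathbb{C}$, so $\alpha x+\beta y=1$. If $y\neq 0$, choose integers $m,k$ with $mx+ky=1$ (possible as $\gcd(x,y)=1$) and put $t=(m-\alpha)/y$; then $\alpha+ty=m\in\mathbb{Z}$, and a short computation using $\beta y=1-\alpha x$ and $mx+ky=1$ gives $\beta-tx=k\in\mathbb{Z}$. If $y=0$, then coprimality forces $x=\pm1$, hence $\alpha=1/x\in\mathbb{Z}$, and $t=\beta/x$ works: $\alpha+ty=\alpha\in\mathbb{Z}$ and $\beta-tx=0$. (Symmetrically for $x=0$.) Feeding these values into Lemma~\ref{l1} yields the desired $h$, and the proof concludes.

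\emph{Main obstacle.} The delicate point is the consistency of the pointwise correction in Step~3: a single number $t$ is required to satisfy two congruence-type conditions at once, and their compatibility rests precisely on the relation $\alpha x+\beta y=1$ coming from $af+bg=1$, together with the integral Bézout relation $mx+ky=1$. One must also treat the degenerate values $f(n)=0$ or $g(n)=0$ explicitly, where the hypothesis $\gcd(f(n),g(n))=1$ is exactly what forces $a(n)$, respectively $b(n)$, to be an integer already. The one non-elementary ingredient is the Bézout property of $\mathcal{O}$ invoked in Step~1, which is classical.
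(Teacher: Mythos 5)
Your proposal is correct and follows essentially the same route as the paper: solve the Bézout equation in the ring of all entire functions (the paper cites Helmer's Theorem 9 for this), then correct by $u=a+hg$, $v=b-hf$ where $h$ is an entire function, supplied by Lemma~\ref{l1}, whose values at each $n\in\mathbb{Z}$ are chosen exactly as in your Step 3 to make $u(n),v(n)$ the integer Bézout coefficients of $f(n),g(n)$. The only difference is cosmetic: you verify the pointwise compatibility and the degenerate case $g(n)=0$ a bit more explicitly than the paper does.
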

 \begin{proof}
We know from Theorem 9 of \cite{OH}
that  there are  entire functions   $u_1$ and $v_1$ such that  $u_1f+v_1g=1$.  On the other hand for each $n\in \mathbb{Z}$ there exist $a_n , b_n \in \mathbb{Z}$ such that $a_n f(n)+b_ng(n)=1$.  We let $k$ be an entire function such that
   $$k(n)=\frac{a_n-u_1(n)}{g(n)}  \;\; \text{if $g(n)\neq0$ and } k(n)=\frac{v_1(n)-b_n}{f(n)}\;\;\text{if $g(n)=0$. }$$
Noting here that when both $g(n)$ and $f(n)$ are non-zero we have
$$\frac{a_n-u_1(n)}{g(n)}=\frac{v_1(n)-b_n}{f(n)} $$
Thus putting  $u=u_1+kg$ $v= v_1-kf$.  Clearly $uf+vg=1$ and  $u, v\in \mathfrak{A}$, since $u(n)=a_n$ and $v(n)=b_n$ for all  $n\in \mathbb{Z}$.
 \end{proof}
 \begin{lem}\label{l3}
   If $f,g\in \mathfrak{A} $ such that $Z(f)\cap Z(g)\cap \mathbb{Z}=\emptyset$ then  the ideal  $(f,g)=f\mathfrak{A}+g\mathfrak{A}$ is principal.
 \end{lem}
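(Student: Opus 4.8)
The plan is to produce a single function $d\in\mathfrak{A}$ that serves as a greatest common divisor of $f$ and $g$ in two senses at once — analytically, its divisor of zeros is exactly $Z(f)\cap Z(g)$, and arithmetically, $d(n)=\gcd(f(n),g(n))$ at every integer $n$ — and then to deduce $(f,g)=d\mathfrak{A}$ by reducing to Lemma \ref{l2}.

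First I would construct $d$. By the Weierstrass factorization theorem there is an entire function $D$ whose zero set, counted with multiplicity, is exactly $Z(f)\cap Z(g)$ (each point carrying the minimum of its multiplicities in $Z(f)$ and in $Z(g)$); since $Z(f)\cap Z(g)\cap\mathbb{Z}=\emptyset$ we have $D(n)\neq 0$ for every $n\in\mathbb{Z}$. For each $n$ at least one of $f(n),g(n)$ is nonzero, so $c_n:=\gcd(f(n),g(n))$ is a well-defined positive integer and $c_n/D(n)\neq 0$. By Corollary \ref{cor2.2} there is a non-vanishing entire function $h$ with $h(n)=c_n/D(n)$ for all $n$; I then set $d:=hD$, which is entire with $Z(d)=Z(f)\cap Z(g)$ and $d(n)=c_n\in\mathbb{Z}$, so $d\in\mathfrak{A}$.

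Next, because $Z(d)=Z(f)\cap Z(g)$ is contained, with the appropriate multiplicities, in both $Z(f)$ and $Z(g)$, the quotients $f_1:=f/d$ and $g_1:=g/d$ are entire; moreover $f_1(n)=f(n)/c_n$ and $g_1(n)=g(n)/c_n$ are integers, so $f_1,g_1\in\mathfrak{A}$. A count of orders of vanishing shows $Z(f_1)\cap Z(g_1)=\emptyset$: at any point $z_0$ the order of vanishing of $d$ equals $\min\big(\mathrm{ord}_{z_0}f,\ \mathrm{ord}_{z_0}g\big)$, so at least one of $f_1,g_1$ does not vanish at $z_0$. Likewise $\gcd(f_1(n),g_1(n))=\gcd(f(n)/c_n,\ g(n)/c_n)=1$ for every $n$. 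Hence $f_1,g_1$ satisfy the hypotheses of Lemma \ref{l2}, which furnishes $u,v\in\mathfrak{A}$ with $uf_1+vg_1=1$; multiplying by $d$ gives $uf+vg=d$, so $d\in(f,g)$. Conversely $f=df_1$ and $g=dg_1$ lie in $d\mathfrak{A}$, whence $(f,g)=d\mathfrak{A}$ is principal.

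I expect the only delicate point to be the construction of $d$ in the first step, where one must control the analytic zero divisor and the arithmetic values on $\mathbb{Z}$ simultaneously; this is precisely why Corollary \ref{cor2.2} (prescribing nonzero integer values by a \emph{non-vanishing} entire function) is invoked rather than Lemma \ref{l1} alone. Everything afterward is routine bookkeeping with orders of vanishing and elementary divisibility of integers.
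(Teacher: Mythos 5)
Your proposal is correct and is essentially the paper's own argument: construct $d$ with $Z(d)=Z(f)\cap Z(g)$, normalize via Corollary \ref{cor2.2} so that $d(n)=\gcd(f(n),g(n))$ for all $n\in\mathbb{Z}$, and then apply Lemma \ref{l2} to $f/d$ and $g/d$ to obtain $(f,g)=d\mathfrak{A}$. You merely spell out the multiplicity bookkeeping and the verification of Lemma \ref{l2}'s hypotheses, which the paper leaves implicit.
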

 \begin{proof}
   Let $d$ be an entire function such that $Z(d)=Z(f)\cap Z(g)$. We assume that $d(n)=gcd(f(n),g(n))$, for all $n\in \mathbb{Z}$ ( by virtue of corollary \ref{cor2.2} one can multiply $d$ by a none vanishing entire function $h$ such that $h(n)= d(n)^{-1}gcd(f(n),g(n)),\;  n\in \mathbb{Z} )$. Thus $f/d$ and $g/d$ are elements of $\mathfrak{A}$ that  satisfy
   the hypothesises  of Lemma \ref{l2} and from  which  it follows  that $f\mathfrak{A}+g\mathfrak{A}=d\mathfrak{A}$.
 \end{proof}
\begin{thm}\label{th1}
    Let $f_1,f_2,...f_r\in \mathfrak{A}$. The  finitely generated ideal   $ I= (f_1,f_2,..,f_r)$ is principal if and only if for each  $n\in \mathbb{Z}$ there exists $j_n\in \{1,2,...,r\}$ such that $(f_j/f_{j_n})(n)\in \mathbb{Q}$ for all $j= 1,2,...,r.$
\end{thm}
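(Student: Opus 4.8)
The plan is to compare orders of vanishing at integer points in both directions, and for the harder implication to construct the principal generator explicitly.

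\emph{Only if.} Suppose $I=(d)$ with $d\in\mathfrak{A}$. Since $f_j\in I$ we may write $f_j=d\,g_j$ with $g_j\in\mathfrak{A}$, and $d=\sum_j u_jf_j$ with $u_j\in\mathfrak{A}$. Fix $n\in\mathbb{Z}$. From $d\mid f_j$ we get $\mathrm{ord}_n(d)\le\mathrm{ord}_n(f_j)$ for all $j$, while $d=\sum_j u_jf_j$ gives $\mathrm{ord}_n(d)\ge\min_j\mathrm{ord}_n(f_j)$; hence $\mathrm{ord}_n(d)=\min_j\mathrm{ord}_n(f_j)$. Choosing $j_n$ with $\mathrm{ord}_n(f_{j_n})=\mathrm{ord}_n(d)$, the function $g_{j_n}=f_{j_n}/d$ does not vanish at $n$, so $g_{j_n}(n)\in\mathbb{Z}\setminus\{0\}$; as $f_j/f_{j_n}=g_j/g_{j_n}$ as meromorphic functions and $g_j(n)\in\mathbb{Z}$, we conclude $(f_j/f_{j_n})(n)=g_j(n)/g_{j_n}(n)\in\mathbb{Q}$ for every $j$.

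\emph{If.} At a given $n$ the hypothesis already forces $\mathrm{ord}_n(f_{j_n})=\min_j\mathrm{ord}_n(f_j)$, since otherwise $f_j/f_{j_n}$ would have a pole at $n$. By Weierstrass' theorem pick an entire $d_0$ with $\mathrm{ord}_a(d_0)=\min_j\mathrm{ord}_a(f_j)$ for every $a\in\mathbb{C}$, and set $\tilde f_j=f_j/d_0$, an entire function; then $\bigcap_j Z(\tilde f_j)=\emptyset$, $\tilde f_{j_n}(n)\neq 0$ for each $n$, and $q_{j,n}:=\tilde f_j(n)/\tilde f_{j_n}(n)=(f_j/f_{j_n})(n)\in\mathbb{Q}$ with $q_{j_n,n}=1$. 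Write $q_{j,n}=p_{j,n}/s_n$ with $s_n\in\mathbb{Z}_{>0}$, $p_{j,n}\in\mathbb{Z}$ and $\gcd_j p_{j,n}=1$ (take $s_n$ the l.c.m.\ of the denominators of $q_{1,n},\dots,q_{r,n}$; then $q_{j_n,n}=1$ gives $s_n=p_{j_n,n}$, and a short $\ell$-adic valuation count yields the coprimality). By Corollary \ref{cor2.2} choose a non-vanishing entire $h$ with $h(n)=\tilde f_{j_n}(n)/s_n$ for all $n$, and set $d=h\,d_0$ and $g_j=\tilde f_j/h$. Three points then finish the proof. (i) $g_j(n)=q_{j,n}s_n=p_{j,n}\in\mathbb{Z}$, so $g_j\in\mathfrak{A}$, and $\gcd_j g_j(n)=1$ for every $n$, while $Z(g_j)=Z(\tilde f_j)$, so $\bigcap_j Z(g_j)=\emptyset$. (ii) $d\in\mathfrak{A}$: if $d_0(n)=0$ then $d(n)=0$, and if $d_0(n)\neq 0$ then $d(n)=f_{j_n}(n)/s_n$, which is an integer because $s_n f_j(n)=p_{j,n}f_{j_n}(n)$ for all $j$ together with $\gcd_j p_{j,n}=1$ forces $s_n\mid f_{j_n}(n)$. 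Since $f_j=d\,g_j$ with $d,g_j\in\mathfrak{A}$, this already gives $I\subseteq(d)$. (iii) $(g_1,\dots,g_r)=\mathfrak{A}$: granting this, $1=\sum_j v_jg_j$ with $v_j\in\mathfrak{A}$ yields $d=\sum_j v_jf_j\in I$, so $I=(d)$ is principal.

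Point (iii) is an $r$-variable version of Lemma \ref{l2}, and this is where the real work lies. Using the Bézout property of the ring of entire functions (cf.\ Theorem 9 of \cite{OH}) choose entire $w_j$ with $\sum_j w_jg_j=1$, and for each $n$ integers $a_{j,n}$ with $\sum_j a_{j,n}g_j(n)=1$ (possible since $\gcd_j g_j(n)=1$). Correct the $w_j$ by an antisymmetric ``syzygy'' term: by Lemma \ref{l1} choose entire functions $k_{jl}$, antisymmetric in $j,l$, with $k_{jl}(n)=\big(b_{j,n}g_l(n)-b_{l,n}g_j(n)\big)\big/\sum_m g_m(n)^2$, where $b_{j,n}=a_{j,n}-w_j(n)$ and the denominator $\sum_m g_m(n)^2$ is a nonzero integer because the $g_m(n)$ are integers, not all zero. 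Setting $v_j=w_j+\sum_l k_{jl}g_l$, antisymmetry gives $\sum_j v_jg_j=\sum_j w_jg_j=1$, and one checks, using the identity $\sum_l b_{l,n}g_l(n)=0$, that $v_j(n)=a_{j,n}\in\mathbb{Z}$, so $v_j\in\mathfrak{A}$, which proves (iii). The obstacles I anticipate are precisely the solvability of this syzygy system compatibly with the prescribed integer values (the reason the form $\sum_m g_m(n)^2\neq 0$ is used) and the arithmetic bookkeeping in step (ii); everything else is routine.
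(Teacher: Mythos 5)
Your proof is correct, and it coincides with the paper's argument in the "only if" half (comparing orders of vanishing at each $n\in\mathbb{Z}$) and in the removal of common zeros: your $d=h\,d_0$ with $d(n)=f_{j_n}(n)/s_n$ is essentially the paper's $D=\alpha g$ with $Z(g)=\bigcap_j Z(f_j)$ and $D(n)=f_{j_n}(n)/q_n$, $q_n$ the l.c.m.\ of the denominators. Where you genuinely diverge is the final, coprime stage. The paper, once $\bigcap_j Z(f_j)=\emptyset$, does not prove any $r$-variable B\'ezout identity: it adjoins the auxiliary element $h=\sum_j h_jf_j$ with $h(n)\neq 0$ for all $n\in\mathbb{Z}$ and collapses the enlarged generating set $(h,f_1,\dots,f_r)$ two generators at a time via Lemma \ref{l3} (itself resting on Lemma \ref{l2}), obtaining principality without ever showing that the cofactors are comaximal. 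You instead normalize so that $\gcd_j g_j(n)=1$ and $\bigcap_j Z(g_j)=\emptyset$ and prove the stronger statement $(g_1,\dots,g_r)=\mathfrak{A}$ by an $r$-variable analogue of Lemma \ref{l2}: starting from Helmer's identity $\sum_j w_jg_j=1$ and correcting by the antisymmetric syzygy $k_{jl}$, where the denominator $\sum_m g_m(n)^2\neq 0$ and the identity $\sum_l b_{l,n}g_l(n)=0$ give $v_j(n)=a_{j,n}$; your $\ell$-adic checks that $\gcd_j p_{j,n}=1$ and that $s_n\mid f_{j_n}(n)$ are also sound. The trade-off is clear: the paper's auxiliary-element trick stays entirely within the two-generator lemmas already established, while your route is more self-contained at the coprime step, exhibits the generator $d$ explicitly as a gcd with comaximal cofactors, and needs only the $r$-function form of Helmer's theorem, which follows from the two-function case by induction, so that reliance is harmless.
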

\begin{proof}
Let us  suppose first  that  $I$ is   principal ideal and put $ I=d\mathfrak{A}$ for some $d\in \mathfrak{A}$.  Clearly  $Z(d)=\cap_{j=1,2,...r} Z(f_j) $ and for $n\in \mathbb{Z}$ there exists $j_n$ such that $o(d,n)=o(f_{j_n}, n)$ (   multiplicity of $n$ ). As $(f_{j_n}/d)(n)\in \mathbb{Z}\setminus\{0\}$ and $(f_j/d)(n)\in \mathbb{Z}$ for all $j= 1,2,...,r$  then  $(fj/f_{j_n})(n)\in \mathbb{Q}$.
Conversely,  suppose first that $\cap_{j=1,2,...,r}Z(f_j)=\emptyset$.  Then for all $n\in \mathbb{Z}$ the integers   $f_1(n),\;f_2(n),...,f_r(n)$ are not all zero, and hence   one can choose   $a_{1,n},a_{2,n},...,a_{r,n}\in \mathbb{Z}$ such that
 $$a_{1,n}f_1(n)+a_{2,n}f_2(n)+...+a_{r,n}f_r(n)\neq 0.$$
Taking  $h_j\in \mathfrak{A}$, $j= 1,2,...,r$  such that $h_j(n)=a_{j,n}$, for all $n\in\mathbb{Z}$ and let  $h=h_1f_1+h_2f_2+...+,h_rf_r$. Now as
$ I= (h,f_1,f_2,...,f_r)$
 then we can  apply successively Lemma \ref{l2} to conclude that I is principal.
\par  Suppose now $\cap_{j=1,2,...,r}Z(f_j)\neq \emptyset$ and let  $g$ be an entire function such that $Z(g)=\cap_{j=1,2,...,r}Z(f_j)$. By hypothesis   we set
$$ \frac{f_j}{f_{j_n}}(n)=\frac{p_{j,n}}{q_{j,n}},\qquad\text{with}\qquad gcd( p_{j,n},q_{j,n})=1,  \quad  n\in \mathbb{Z},\; j=1,2,...,r.$$
Let  $q_n$ be the least common multiple of all   $q_{j,n }$, clearly this is a divisor of $f_{j_n}(n)$. Choosing a none vanishing entire function $\alpha$ such that
$$\alpha(n)=\frac{f_{j_n}}{q_n g}(n), \qquad n\in \mathbb{Z}$$
 and put $D= \alpha g$. Noting here that $(f_{j_n} / g)(n)\neq 0$,  since  otherwise  we would  have    $(f_j /g) (n)=0$  for all $j=1,2,...,r$  which cannot occur  because $Z(g)=\cap_{j=1,2,...,r}Z(f_j)$. Thus we have   $D\in \mathfrak{A}$ and divides all  $f_j$, $j=1,2,...,r$. As Now $\cap_{j=1,2,...,r}Z(f_j/D)=\emptyset$  then the ideal
$(f_1/D,f_2/D,...,f_r/D)$ is   principal  and it is the same for the ideal  $I$.
\end{proof}
 Taking the following example,  $f(z)=z$ and $g(z)= \sin(\pi z) $. Since $(g/f)(0)=\pi$, then the ideal $(f,g)$ is not   principal
 \begin{cor}
  An  invertible ideal of $\mathfrak{A}$ is  principal.
\end{cor}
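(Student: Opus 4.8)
The plan is to deduce from invertibility the numerical criterion of Theorem~\ref{th1}, which then forces $I$ to be principal. If $I=(0)$ there is nothing to prove, so assume $I\neq(0)$, and let $K$ denote the field of fractions of $\mathfrak{A}$ and $I^{-1}=\{x\in K:\ xI\subseteq\mathfrak{A}\}$.

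First I would record the standard consequences of invertibility. Since $II^{-1}=\mathfrak{A}$, we may write $1=\sum_{i=1}^{r}f_ig_i$ with $f_i\in I$ and $g_i\in I^{-1}$; discarding any vanishing generators we may assume each $f_i\not\equiv 0$ (at least one term is nonzero as the sum is $1$). For $x\in I$ we have $x=\sum_i(g_ix)f_i$ with $g_ix\in\mathfrak{A}$, so $I=(f_1,\dots,f_r)$ is finitely generated; moreover $g_if_j\in\mathfrak{A}$ for \emph{every} pair $i,j$, and in particular $f_ig_i\in\mathfrak{A}$. This last fact is what I will exploit.

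Now fix $n\in\mathbb{Z}$. Let $m=\min_j o(f_j,n)$ (finite, since no $f_j$ vanishes identically), choose $j_0$ with $o(f_{j_0},n)=m$, and set $S=\{i:\ o(f_i,n)=m\}$. For $i\in S$ the ratio $f_i/f_{j_0}$ is holomorphic and nonzero at $n$, so $c_i:=(f_i/f_{j_0})(n)\in\mathbb{C}\setminus\{0\}$; also put $b_i:=(g_if_{j_0})(n)\in\mathbb{Z}$, which is legitimate since $g_if_{j_0}\in\mathfrak{A}$. I would then observe: (a) for $i\notin S$ the ratio $f_i/f_{j_0}$ vanishes at $n$, hence $(f_ig_i)(n)=0$, whereas for $i\in S$ the identity $f_ig_i=(f_i/f_{j_0})(g_if_{j_0})\in\mathfrak{A}$ gives $a_i:=(f_ig_i)(n)=c_ib_i\in\mathbb{Z}$; (b) evaluating $1=\sum_i f_ig_i$ at $n$ yields $\sum_{i\in S}a_i=1$, so some $a_{i_0}$ with $i_0\in S$ is nonzero, which forces $b_{i_0}\neq0$; (c) for every $j$ we have $g_{i_0}f_j\in\mathfrak{A}$, and when $j\in S$ a direct computation gives $(g_{i_0}f_j)(n)=b_{i_0}c_j\in\mathbb{Z}$.

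Finally I would conclude at this $n$: for $j\in S$,
$$\bigl(f_j/f_{i_0}\bigr)(n)=\frac{c_j}{c_{i_0}}=\frac{b_{i_0}c_j}{b_{i_0}c_{i_0}}=\frac{(g_{i_0}f_j)(n)}{a_{i_0}}\in\mathbb{Q},$$
while for $j\notin S$ we have $o(f_j,n)>m=o(f_{i_0},n)$, so $f_j/f_{i_0}$ vanishes at $n$ and $(f_j/f_{i_0})(n)=0\in\mathbb{Q}$. Thus $j_n:=i_0$ satisfies the requirement of Theorem~\ref{th1} at $n$, and since $n\in\mathbb{Z}$ was arbitrary, Theorem~\ref{th1} gives that $I$ is principal. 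The only delicate point is step (a)--(c): evaluating the Bézout-type identity $1=\sum f_ig_i$ at $n$ by itself produces merely one linear relation among the $c_i$, which is not enough to conclude rationality; the rationality genuinely comes from using that \emph{all} products $g_if_j$ — and in particular $f_ig_i$ itself — lie in $\mathfrak{A}$, so that both $c_ib_i$ and $b_{i_0}c_j$ are honest integers.
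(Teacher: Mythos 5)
Your proof is correct and follows essentially the same route as the paper: write $1=\sum_i f_ig_i$ with $f_i\in I$, $g_i\in I^{-1}$, use that all products $g_if_j$ lie in $\mathfrak{A}$ to get, at each $n\in\mathbb{Z}$, an index $i_0$ with $(f_{i_0}g_{i_0})(n)\neq 0$ and hence $(f_j/f_{i_0})(n)\in\mathbb{Q}$, then invoke Theorem~\ref{th1}. Your extra bookkeeping with the minimal vanishing order $m$ and the set $S$ only makes explicit a point the paper leaves implicit (that $(f_{i_0}g_{i_0})(n)\neq 0$ forces $o(f_{i_0},n)$ to be minimal, so the ratios are holomorphic at $n$), so it is the same argument in slightly more detail.
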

\begin{proof}
  Suppose that $I$ is an invertible ideal of $\mathfrak{A}$, then  it's inverse is the $\mathfrak{A}$-module  given by $I^{-1}=\{h\in \mathfrak{K}, hI\subset\mathfrak{A}\}$, where
$\mathfrak{K}$ is the fraction field of the  ring $\mathfrak{A}$.
It follows  that  $I$  and $I^{-1}$  are  finitely generated,   let  $I=( f_1,f_2,...,f_s)$ and $I^{-1}=(h_1,h_2,...,h_s)$ with
$\sum_{j=1}^sh_jf_j=1$. Thus  for $n\in \mathbb{Z} $ there exists $j_n$ such that $(h_{j_n}f_{j_n})(n)\neq0$, hence
$$\frac{f_j}{f_{j_n}}(n)=\frac{(h_{j_n}f_j)(n)}{(h_{j_n}f_{j_n})(n)}\in \mathbb{Q},\qquad  j=1,2,...,s$$
and $I$ is principal by Theorem \ref{th1}.
\end{proof}
\section{Maximal and prime ideals }
There are two remarkable examples of maximal ideals. The First type   is  given by ideals
$M_{a}=\{f\in \mathfrak{ A},\; f(a)=0\}$,   for  $a\in \mathbb{C}\setminus\mathbb{Z}$ and the second type  is  given by ideals
    $M_{n,p}=\{f\in \mathfrak{ A},\; f(n) \equiv0 (p)\}$  for  $n\in\mathbb{Z} $ and a  prime number $p$.
\begin{defn}
Let $ \Pi(f)$ be  the set of the   irreducible  divisors of    $f\in \mathfrak{A}$.  For $\delta \in  \Pi(f)$,   we  define
$$o(\delta, f)=\max\{k ; \; \delta^k divides f  \}, \qquad o(\delta, 0)=\infty.$$
 and  $$m(f)=\sup \{o(\delta, f), \delta\in \Pi(f)\}$$
\end{defn}
Observe that when $m(f)=1$ then all zeros of $f$ are simple  and  $f(n)$ is a free integer square for all $n\in \mathbb{Z}$. In addition, if
$\Pi(f)\cap \Pi(g)= \emptyset$ then $Z(f)\cap Z(g)=\emptyset$ and $gcd( f(n),g(n))=1$ for all $n\in \mathbb{Z}$.

\begin{lem}
   Each (proper) maximal ideal of $\mathfrak{A}$  contains an element $f$  with $m(f)=1$.
\end{lem}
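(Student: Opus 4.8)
The plan is to invoke the maximality of $M$ twice, through the identity $M+f\mathfrak A=\mathfrak A$ for $f\notin M$, and to close with the B\'ezout-type Lemma \ref{l2}. The point I want to exploit is that this can be done in one shot, with no inductive reduction of zero-multiplicities or of $p$-adic valuations.

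First I would dispose of a trivial case: if the constant function $2$ lies in $M$, then $2$ is already the required element, for by Proposition \ref{p1} the irreducible divisors of $2$ are exactly the functions of type (i) equal to $2$ at one integer and to $\pm1$ elsewhere, each dividing $2$ to the first power, whence $m(2)=1$. So assume $2\notin M$. By maximality $M+2\mathfrak A=\mathfrak A$, so there is $k\in\mathfrak A$ with $\mu=1-2k\in M$. Then $\mu(n)=1-2k(n)$ is odd for every $n\in\mathbb Z$; in particular $\mu\neq 0$, $\mu$ has no zero on $\mathbb Z$, and $\mu$ is not invertible (it lies in a proper ideal). Using Weierstrass' theorem together with Corollary \ref{cor2.2} I would build $d\in\mathfrak A$ having a simple zero at each distinct zero of $\mu$ (all of them off $\mathbb Z$), no other zero, and $d(n)$ equal to the product of the distinct prime divisors of $\mu(n)$ for every $n\in\mathbb Z$. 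Then all zeros of $d$ are simple and non-integral and every $d(n)$ is a nonzero square-free integer, so every irreducible divisor of $d$ occurs in $d$ to the first power; and since $d$ is not invertible (because $\mu$ is not), $d$ does have an irreducible divisor. Hence $m(d)=1$.

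It remains to show that $d\in M$, which is the crux of the argument. Suppose $d\notin M$. By maximality $M+d\mathfrak A=\mathfrak A$, so $\nu=1-\ell d\in M$ for some $\ell\in\mathfrak A$. At every zero $a$ of $\mu$ one has $d(a)=0$, so $\nu(a)=1\neq 0$; hence no zero of $\mu$ is a zero of $\nu$, i.e. $Z(\mu)\cap Z(\nu)=\emptyset$. For every $n\in\mathbb Z$ one has $\nu(n)\equiv 1\pmod{d(n)}$, so $\gcd(\nu(n),d(n))=1$; as $d(n)$ has exactly the prime divisors of $\mu(n)$, this gives $\gcd(\mu(n),\nu(n))=1$. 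Thus $\mu$ and $\nu$ satisfy the hypotheses of Lemma \ref{l2}, which yields $u,v\in\mathfrak A$ with $u\mu+v\nu=1$; since $\mu,\nu\in M$ this forces $1\in M$, a contradiction. Therefore $d\in M$, and $d$ is an element of $M$ with $m(d)=1$.

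The delicate points I foresee are only the routine check that the $d$ produced by Weierstrass' theorem and Corollary \ref{cor2.2} genuinely lies in $\mathfrak A$, and the verification that $m(d)=1$: here it is essential that $d$ have no integral zero (otherwise an irreducible dividing $d$ at such an integer would occur to infinite order), and this is precisely why $\mu$, hence $d$, was arranged to be nonzero on $\mathbb Z$ — which is exactly what the choice $2\notin M$ buys. (The whole scheme works verbatim with $2$ replaced by any rational prime, so the real dichotomy is whether or not $M$ contains a rational prime.)
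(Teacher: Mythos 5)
Your proof is correct, and it follows the same basic strategy as the paper --- use $2\notin M$ (up to the trivial case) to obtain $\mu\in M$ with no integral zeros, pass to a ``radical'' companion with simple zeros and square-free integer values, and force it into $M$ by a contradiction via Lemma \ref{l2} --- but your execution is genuinely more economical. The paper proceeds in two stages: it first uses the radical function $h$ and a B\'ezout interpolation (Lemma \ref{l1}) to manufacture an element $d\in M$ with $d(n)=1$ for all $n\in\mathbb Z$, and only then replaces $d$ by a simple-zero version $\mathfrak d$ and runs the Lemma \ref{l2} contradiction on the pair $(d,b)$. You instead build the candidate $d$ in one shot from $\mu$ (simple zeros exactly at $Z(\mu)$, values $\mathrm{rad}(\mu(n))$) and, assuming $d\notin M$, apply Lemma \ref{l2} directly to $(\mu,\nu)$ with $\nu=1-\ell d\in M$: the coprimality of values follows from $\nu(n)\equiv 1\pmod{d(n)}$ and the fact that $d(n)$ carries exactly the primes of $\mu(n)$, and the disjointness of zero sets from $\nu=1$ on $Z(\mu)$. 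This merges the paper's two dichotomies ($h\in M$? $\mathfrak d\in M$?) into one and avoids the intermediate interpolation; what the paper's longer route buys is the auxiliary element of $M$ with all integer values equal to $1$, which is close in spirit to what later arguments reuse, but it is not needed for the statement itself. Your handling of the case $2\in M$ (observing $m(2)=1$ directly) also replaces the paper's ``$2$ or $3$ is not in $M$'' remark, and your side remarks --- that $m(d)=1$ requires $d$ to have no integral zeros and at least one irreducible divisor --- are exactly the right checks.
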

\begin{proof}
  Let $M$ be a  maximal   ideal. Let see first that $M$ contains a function not vanishing  on $\mathbb{Z}$. indeed,  its  obvious that  $2$ or $3$ does not belongs to $M$. If   $2\notin M$ (similarly if $3\notin M $ )
  and as $A/M$ is a field then  there is
  $ f \in M$ and $g\in \mathfrak{ A} $  such that $ 2g+f=1$  which asserts   that the function   $f$ doest not vanish on $\mathbb{Z}$. Now  for an each $f$ we  let $h\in \mathfrak{ A} $
 a  none vanishing function     such that
$$ h(n)= \prod_{p/f(n)} p, \qquad n\in \mathbb{Z} $$
 where the product is over  prime integers divisors of $f(n)$ .
If $h\in M$  then $h$ is a desired function and the lemma is proved.   Otherwise  there exist $u\in \mathfrak{ A}$ and $v\in M $   such that $uh+v=1$. This    implies that  for  $n\in \mathbb{Z}$,
$gcd (h(n),  v(n))=1$ and
 consequently   $gcd( f(n), v(n))=1$. Thus from Lemma \ref{l1}  there exist $ \alpha ,\; \beta\in \mathfrak{ A}$ such that
 $d=\alpha f+\beta v \in M$ with  $ d(n)=1$ for all $n\in \mathbb{Z}$. Now  observe  that $Z(d)\neq \emptyset$, since otherwise $d$ is invertible. Then by Weierstrass theorem  one can construct  entire function $  \mathfrak{d}$ that has the same zeros as $d$ with  multiplicities equal to one  and such that  $\mathfrak{d}(n)=1$ for all $n\in \mathbb{Z}$. If $\mathfrak{d}\notin  M$ then there exist  $a\in \mathfrak{A} $ and $b\in M$ such that $a\mathfrak{d}+b=1$. Hence     $  Z(d) \cap Z(b)=\emptyset$ and   from Lemma \ref{l2}  there are  $r,s\in \mathfrak{A}$ such that
 $rd+sb=1\in M$, this yields a contradiction since  $M\neq \mathfrak{A}$. This  achieves the proof of the lemma.
\end{proof}
\begin{thm}\label{th2}
  A prime ideal $P$ of $\mathfrak{A}$ is  maximal if and only if   $P$  contains   an element $f$ with $m(f)=1$.
\end{thm}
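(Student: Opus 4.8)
The forward implication needs no work: a maximal ideal is in particular a prime ideal, and, being a proper maximal ideal, it contains an element $f$ with $m(f)=1$ by the lemma just proved. So the whole point is the converse, and the plan is to prove it by the following global-to-local device: given a prime ideal $P$ containing some $f$ with $m(f)=1$, I will show that $P+g\mathfrak{A}=\mathfrak{A}$ for \emph{every} $g\in\mathfrak{A}\setminus P$, which is exactly maximality of $P$. The two tools will be Lemma~\ref{l3}, to peel off the common part of $f$ and $g$, and Lemma~\ref{l2}, to produce the unit.

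Here is how the argument will run. First I would note the only structural consequence of $m(f)=1$ that is used at the level of $f$ alone: every $f(n)$, $n\in\mathbb{Z}$, is a nonzero square-free integer (so $Z(f)\cap\mathbb{Z}=\emptyset$) and all zeros of $f$ are simple. Now fix $g\notin P$. Since $Z(f)\cap Z(g)\cap\mathbb{Z}=\emptyset$, Lemma~\ref{l3} gives $(f,g)=d\mathfrak{A}$ for some $d\in\mathfrak{A}$; writing $f=d f_1$, $g=d g_1$ and $d=af+bg$, cancellation in the domain $\mathfrak{A}$ yields $a f_1+b g_1=1$, from which one reads off $Z(f_1)\cap Z(g_1)=\emptyset$, $\gcd(f_1(n),g_1(n))=1$, and $|d(n)|=\gcd(f(n),g(n))$ for all $n$; and because the zeros of $f$ are simple, any $z_0\in Z(g)$ has $z_0\notin Z(d)$ only if it already lies in $Z(g_1)$, so in fact $Z(f_1)\cap Z(g)=\emptyset$. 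Next, since $f=d f_1\in P$ and $P$ is prime, either $d\in P$ or $f_1\in P$; the first case is impossible, for $d\mid g$ would then force $g\in P$, against the choice of $g$. Hence $f_1=f/d\in P$. Finally the pair $(f_1,g)$ satisfies the hypotheses of Lemma~\ref{l2}: the zero sets are disjoint by the above, and $\gcd(f_1(n),g(n))=1$ for every $n$ because $f(n)$ is square-free — a divisor $d(n)$ of the square-free integer $f(n)$ is coprime to the complementary factor $f_1(n)=f(n)/d(n)$, so any prime dividing both $f_1(n)$ and $g(n)$ would divide $\gcd(f(n),g(n))=\pm d(n)$ and hence divide $\gcd(f_1(n),d(n))=1$. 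Lemma~\ref{l2} then gives $u,v$ with $u f_1+v g=1$, and since $f_1\in P$ this shows $1\in P+g\mathfrak{A}$; as $g$ was arbitrary, $P$ is maximal.

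The bookkeeping with zero sets and multiplicities is routine, and the delicate spot is really just the step where primality forces $f_1\in P$ rather than $d\in P$ (handled by $g\notin P$). The one genuinely essential use of the hypothesis — and the reason the statement can fail without it — is the coprimality $\gcd(f_1(n),g(n))=1$, which rests entirely on $f(n)$ being square-free, i.e.\ on $m(f)=1$; if $f(n)$ carried a repeated prime factor, $f_1$ and $g$ could still share a prime at $n$ and Lemma~\ref{l2} would be inapplicable.
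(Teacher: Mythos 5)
Your proof is correct and follows essentially the same route as the paper's: both reduce to $(f,h)=(d)$ via Theorem \ref{th1} (Lemma \ref{l3}), use $m(f)=1$ to make the factorization $f=d\cdot(f/d)$ into comaximal pieces, let primality of $P$ decide which factor lies in $P$, and finish with Lemma \ref{l2}. The only difference is in the packaging: the paper fixes a maximal ideal $M\supseteq P$ and proves $M=P$ by applying Lemma \ref{l2} to the pair $(f/d,\,d)$, whereas you show directly that $P+g\mathfrak{A}=\mathfrak{A}$ for every $g\notin P$ by applying Lemma \ref{l2} to $(f/d,\,g)$, at the price of the small extra (and correctly executed) verification that $Z(f/d)\cap Z(g)=\emptyset$ and $\gcd\bigl((f/d)(n),g(n)\bigr)=1$ for all $n\in\mathbb{Z}$.
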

\begin{proof}
 Let $M$  be a maximal ideal containing $P$ and
  let $f\in P$  with $m(f)=1$. We will prove that $M=P$.
 In fact, let $h\in M$,   there exists $d\in \mathfrak{A}$ such that $(f,h)=(d)$.  Then one can write $f=g d$ with
 $Z(g)\cap Z(d)=\emptyset$ and $ gcd(g(n), d(n))=1$, for all $n \in \mathbb{Z}$. In view of Lemma \ref{l1} the function  $g$ can not belongs to $M$ and  thus $d\in P $, since    $P$ is a  prime ideal. Therefore  $h\in P$.
\end{proof}
\begin{cor}\label{cgd}
   All finite family  of functions  of a  maximal ideal  $M$ of $\mathfrak{A}$ have a common divisor in $M$.
\end{cor}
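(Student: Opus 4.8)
The plan is to reduce the assertion to a single application of Theorem \ref{th1}, after adjoining to the given family one carefully chosen function. Fix a (proper) maximal ideal $M$ and functions $f_1,\dots,f_r\in M$; the goal is to produce $D\in M$ dividing every $f_j$.

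The first step is to exhibit an auxiliary $g\in M$ that does not vanish at any integer. This is precisely the opening move in the proof of the lemma preceding Theorem \ref{th2}: since $1\notin M$, at least one of $2,3$ lies outside $M$, say $2\notin M$; as $\mathfrak{A}/M$ is a field there are $u\in\mathfrak{A}$ and $g\in M$ with $2u+g=1$, so $g(n)=1-2u(n)$ is odd, hence nonzero, for every $n\in\mathbb{Z}$ (the case $3\notin M$ is identical).

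The second step is to apply Theorem \ref{th1} to the finitely generated ideal $J=(g,f_1,\dots,f_r)$, which is contained in $M$ because all of its generators are. For each $n\in\mathbb{Z}$ I take the index corresponding to $g$: then $(g/g)(n)=1\in\mathbb{Q}$, and since $g(n)\neq 0$ while $g(n),f_j(n)\in\mathbb{Z}$, one has $(f_j/g)(n)=f_j(n)/g(n)\in\mathbb{Q}$ for all $j$. Hence the hypothesis of Theorem \ref{th1} holds and $J$ is principal, say $J=(D)$ with $D\in\mathfrak{A}$; then $D\in J\subseteq M$, and $f_j\in(D)$ gives $D\mid f_j$ for every $j$. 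So $D$ is the required common divisor belonging to $M$.

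I do not expect a real obstacle here; the only delicate point is that the criterion in Theorem \ref{th1} evaluates the meromorphic ratios $f_j/f_{j_n}$ at integers, which is exactly why the pivot $g$ must be chosen not to vanish on $\mathbb{Z}$: that choice makes each ratio $f_j/g$ holomorphic near every integer with an integer (in particular rational) value there. Without enlarging the family in this way the ideal $(f_1,\dots,f_r)$ need not be principal — for instance $z$ and $\sin(\pi z)$ both lie in $M_{0,p}$ yet generate a non-principal ideal — so adjoining $g$ is the essential trick.
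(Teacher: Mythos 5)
Your proof is correct and follows essentially the same route as the paper: adjoin to $f_1,\dots,f_r$ an element of $M$ that is nonzero on $\mathbb{Z}$, apply Theorem \ref{th1} with that element as the pivot to see the enlarged ideal is principal, and note the generator lies in $M$ and divides every $f_j$. The only (harmless) difference is that the paper takes the pivot to be the element $f\in M$ with $m(f)=1$ supplied by the lemma before Theorem \ref{th2}, while you re-derive directly the only property actually used, namely that $M$ contains a function not vanishing on $\mathbb{Z}$, via the same $2$-or-$3$ argument that opens that lemma's proof.
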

\begin{proof}
Let $f_1,f_2,...,f_r$ be a functions in $M$ and let $f\in M$ with $m(f)=1$ ( which does not vanish on $\mathbb{Z}$). By  Theorem \ref{th1}
there is exists $d\in   \mathfrak{A}$ such that  ideal  $(f,f_1,f_2,...,f_r)=(d)$. Then we must have $d\in M $ and  it is    a common divisor of
  $f_1,f_2,...,f_r$.
\end{proof}
 \begin{thm}\label{th3}
   Let  $P$ be a  prime ideal of $\mathfrak{A}$ such that $\cap_{f\in P} \Pi(f)\neq \emptyset$.
  \\
   (i) If  $\cap_{f\in P} Z(f)\neq \emptyset$.      then there exists $a \in \mathbb{C}$ such that  $P=\{f\in \mathfrak{A},\; f(a)=0\}$. \\
   (ii) If $\cap_{f\in P} Z(f) = \emptyset$  then there exist $n\in \mathbb{Z}$ and a prime integer $p$ such that 
$P=\{f\in \mathfrak{A},\; f(n)\equiv 0(p)\}$.
   \end{thm}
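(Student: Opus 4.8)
The plan is to fix a common irreducible divisor $\delta\in\bigcap_{f\in P}\Pi(f)$ (nonempty by hypothesis) and to use Proposition \ref{p1}, which says $\delta$ is of one of two types: \emph{(I)} $\delta$ is nowhere zero, $\delta(n)=\pm(\text{prime})$ for a unique integer $n$, and $\delta(k)\in\{-1,1\}$ for all other integers $k$; or \emph{(II)} $\delta$ has a single simple zero at some $a\in\mathbb C\setminus\mathbb Z$ and $\delta(\mathbb Z)\subset\{-1,1\}$. Since $\delta$ is irreducible, $m(\delta)=1$, so by Theorem \ref{th2} any prime ideal containing $\delta$ is maximal. I assume $P\neq(0)$ (the case $P=(0)$ is degenerate) and fix $0\neq f_{0}\in P$. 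As $\delta$ divides every member of $P$, writing $f=\delta g$ for $f\in P$ and using that $P$ is prime gives $\delta\in P$ or $g\in P$; in the second case $\delta\mid g$ as well, so on iterating either $\delta\in P$ or $\delta^{k}\mid f$ for all $k$. The latter is impossible for $f\neq0$ unless $\delta$ is of type (I) and $f(n)=0$ (for type (II) it would force a zero of infinite order). Hence: if $\delta$ is of type (II), or of type (I) with some $f\in P$ satisfying $f(n)\neq0$, then $\delta\in P$ and $P$ is maximal.

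The main device for the remaining situation is a squaring argument. Suppose $P\subseteq M_{b}:=\{f\in\mathfrak A:f(b)=0\}$ for some $b\in\mathbb C$. Using Corollary \ref{cor2.2} pick a nowhere-zero entire function $h$ with $h(k)=(k-b)^{-1}$ for every integer $k\neq b$, choosing moreover, when $b\in\mathbb Z$, $h(b)$ to be an $o(f_{0},b)$-th root of the leading Taylor coefficient of $f_{0}$ at $b$; set $\Phi(z)=(z-b)h(z)$. Then $\Phi\in\mathfrak A$, $\Phi$ has only a simple zero at $b$, $\Phi\equiv1$ on $\mathbb Z\setminus\{b\}$, and $\tilde f_{0}:=f_{0}/\Phi^{\,o(f_{0},b)}$ lies in $\mathfrak A$ with $\tilde f_{0}(b)\neq0$; since every element of $P$ vanishes at $b$, $\tilde f_{0}\notin P$, and from $f_{0}=\Phi^{\,o(f_{0},b)}\tilde f_{0}\in P$ and primeness we get $\Phi\in P$. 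Now for any $g\in M_{b}$ the quotient $g^{2}/\Phi$ is entire (its only possible singularity is at $b$, where $o(g^{2},b)=2\,o(g,b)\geq2>1=o(\Phi,b)$) and takes values in $\mathbb Z$ at the integers (it vanishes at $b$ and equals $g(k)^{2}/\Phi(k)=g(k)^{2}$ at every other integer), so $g^{2}\in\Phi\mathfrak A\subseteq P$; primeness then yields $g\in P$. Hence $M_{b}\subseteq P$, so $P=M_{b}$.

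Now the two cases. For \emph{(ii)}, $\bigcap_{f\in P}Z(f)=\emptyset$ rules out type (II), so $\delta$ is of type (I) with $\delta(n)=\pm p$; then $\delta\mid f$ for all $f\in P$ gives $P\subseteq M_{n,p}$, and not every $f\in P$ vanishes at $n$ (otherwise $n$ would lie in the intersection), so by the observation above $\delta\in P$, $P$ is maximal, and since $M_{n,p}$ is a proper maximal ideal containing $P$ we get $P=M_{n,p}$. For \emph{(i)}, $\bigcap_{f\in P}Z(f)\neq\emptyset$: if $\delta$ is of type (II) with zero $a$, then $a$ lies in that intersection, so $P\subseteq M_{a}$ and the squaring argument (with $b=a$) gives $P=M_{a}$; if $\delta$ is of type (I) with distinguished integer $n$, then the existence of some $f\in P$ with $f(n)\neq0$ would (by the observation above) force $P=M_{n,p}$, whence $\bigcap_{f\in P}Z(f)=\emptyset$ because the constant $p$ lies in $M_{n,p}$ — contrary to the hypothesis of (i); hence every $f\in P$ vanishes at $n$, so $P\subseteq M_{n}$ and the squaring argument (with $b=n$) gives $P=M_{n}$.

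The step I expect to require the most care is the construction of $\Phi$ when $b=n$ is an integer: its leading Taylor coefficient at $n$ must be pinned down so that dividing $f_{0}$ by $\Phi^{\,o(f_{0},n)}$ returns an element of $\mathfrak A$ with a nonzero value at $n$. This is precisely where integrality at the integer point has to be reconciled with the local analytic behaviour, and it is the feature that makes $\mathfrak A$ behave differently from the ring of all entire functions; the rest follows routinely from Proposition \ref{p1}, Corollary \ref{cor2.2} and Theorem \ref{th2}.
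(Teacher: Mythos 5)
Your argument is correct, and it runs along the same essential lines as the paper's, but with a somewhat different organization that is worth recording. The paper splits case (i) according to whether $\bigcap_{f\in P}Z(f)$ meets $\mathbb{Z}$: at a non-integer common zero $a$ it puts the irreducible $(z-a)h$ into $P$ and concludes maximality, while at an integer common zero $n$ it first forces the polynomial $z-n$ into $P$ (via $f=(z-n)^r\ell$ and a non-vanishing $\alpha$ with $\alpha(n)=1/\ell(n)$) and then squares, writing $b^2=(z-n)a$ and correcting by a non-vanishing $\beta$ to land back in $\mathfrak{A}$. You instead drive the case split by the type of the common irreducible divisor $\delta$ from Proposition \ref{p1}, and you run one uniform squaring argument at any point $b$ with $P\subseteq\{f:\,f(b)=0\}$: your $\Phi=(z-b)h$ is identically $1$ on $\mathbb{Z}\setminus\{b\}$, so $g^2/\Phi$ is integer-valued for free (no analogue of the paper's $\beta$ is needed), and you get $\Phi\in P$ by tuning $h(b)$ to a $k_0$-th root of the leading Taylor coefficient of one fixed $f_0\in P$ rather than by first extracting $z-n$; this tuning is indeed the delicate point and you handle it correctly. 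For (ii) you also supply the iteration ($f=\delta g$, primeness, finiteness of the $p$-adic valuation of $f(n)$ when $f(n)\neq 0$) that the paper compresses into ``it is not hard to see that $\delta_{n,p}\in P$,'' and you rightly note that this iteration genuinely fails when every element of $P$ vanishes at $n$ — precisely the configuration that produces $P=\{f:\,f(n)=0\}$ in case (i). Both proofs tacitly exclude $P=(0)$, which you make explicit. In sum: same key devices (common irreducible divisor, a uniformizer forced into $P$ by primeness, squaring to get the reverse inclusion), with your version more uniform across the integer and non-integer cases and the paper's version leaning on Theorem \ref{th2} to shortcut the non-integer case.
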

\begin{proof}
  (i) We let $D=\cap_{f\in P} Z(f)$.
  We  suppose first  that  $D\cap\mathbb{Z}=\emptyset$. Let $a\in D$ and $h$
  be an none vanishing entire function such that
  $h(n)=(n-a)^{-1}$ for all $n\in\mathbb{Z}$.  For  $f\in P$  we let       $g=((z-a)h)^{k_0}$  where $k_0= o(f,a)$. Clearly $g\in\mathfrak{A}$ and divides $f$. Moreover, since $P$ is a prime ideal,  $ g $ belongs to $P$. Therefore $d=(z-a) h$ is an element of $P$ which irreducible and $P$ is the maximal ideal $\{\mathfrak{f}\in \mathfrak{A},\; \mathfrak{f}(a)=0\}$.
  Suppose  now $D\cap\mathbb{Z}=\emptyset$. Let $n\in D\cap\mathbb{Z}$ and $f\in P$  written as   $f=(z-n)^r\ell$, with $\ell(n)\neq 0$. Given $ \alpha$ a none vanishing entire function such that
  $\alpha (k)=(k-n)^r$, for $k\in \mathbb{Z}$, $k\neq n$ and $\alpha(n)=1/\ell(n)$.  Since ze have   $\alpha\ell\notin P$ and $(z-n)^{r+1}\alpha\ell\in P$, then we conclude that  $P$  contains the polynomial $z-n$. Now for $b\in \mathfrak{A}$  with   $b(n)=0$, we write $b^2=(z-n)a$. Since we have $a(k)\in \mathbb{Q}$ for all $k\in \mathbb{Z}$ then one can find   a none vanishing entire function $\beta $ such that $\beta a\in \mathfrak{A}$. Thus we have  $\beta b^2\in P$ and so $b\in P$. This proves that $P=\{\mathfrak{f}\in \mathfrak{A},\; \mathfrak{f}(n)=0\}$.\\
ii) In this case  the  functions  in $P$ have a common  irreducible divisor of type $\delta_{n,p}$. It's not hard to see that $\delta_{n,p}\in P$ and then $P=\{f\in \mathfrak{A},\; f(n)\equiv 0(p)\}$.
\end{proof}
 \begin{cor}
   Let $n\in \mathbb{Z}$ and $p$ be a prime integer. The maximal ideal $M_{n,p}=\{f\in \mathfrak{A},\; f(n)\equiv 0(mod \;p)\}$ contains one and only one proper prime ideal, namely  $P_{n}= \{f\in \mathfrak{A},\; f(n)=0 \}$. Moreover, the   maximal ideals containing  $P_n$ are all in the  form of $M_{n,p}$.
 \end{cor}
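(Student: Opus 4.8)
The plan is to describe every prime ideal lying below $M_{n,p}$, the crucial observation being that $M_{n,p}$ is \emph{principal}. I would start with the routine points. The evaluation $\varepsilon\colon\mathfrak{A}\to\mathbb{Z}$, $\varepsilon(f)=f(n)$, is a surjective ring homomorphism (it is already onto on constant functions) whose kernel is $P_n$, so $\mathfrak{A}/P_n\cong\mathbb{Z}$; hence $P_n$ is prime but not maximal, $P_n\subseteq M_{n,p}=\varepsilon^{-1}(p\mathbb{Z})$, and $P_n\neq M_{n,p}$ (the constant $p$ belongs to $M_{n,p}\setminus P_n$). This also yields the last sentence at once: the maximal ideals of $\mathfrak{A}$ containing $P_n$ correspond, via $\varepsilon$, to the maximal ideals of $\mathbb{Z}$, i.e.\ to the $p\mathbb{Z}$ with $p$ prime, and $\varepsilon^{-1}(p\mathbb{Z})=M_{n,p}$.

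Next I would prove $M_{n,p}=\delta\,\mathfrak{A}$, where $\delta=\delta_{n,p}$ is a non-vanishing entire function with $\delta(n)=p$ and $\delta(k)=1$ for all $k\in\mathbb{Z}\setminus\{n\}$; such a $\delta$ exists by Corollary \ref{cor2.2} and is irreducible by Proposition \ref{p1}(i). If $f\in M_{n,p}$ then $p\mid f(n)$, so $f/\delta$ is entire ($\delta$ has no zeros) and integer-valued on $\mathbb{Z}$ — it equals $f(n)/p$ at $n$ and $f(k)$ elsewhere — hence $f/\delta\in\mathfrak{A}$ and $f\in\delta\,\mathfrak{A}$; the inclusion $\delta\,\mathfrak{A}\subseteq M_{n,p}$ being obvious, $M_{n,p}=\delta\,\mathfrak{A}$.

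Now let $Q$ be a nonzero prime ideal with $Q\subsetneq M_{n,p}$ (the zero ideal is contained in $M_{n,p}$ too, but it sits inside every ideal, so I read ``proper prime ideal'' in the statement as meaning a nonzero one). Since $\delta$ is irreducible and $M_{n,p}=\delta\,\mathfrak{A}\neq Q$, we cannot have $\delta\in Q$ (that would give $M_{n,p}\subseteq Q$). For $g\in Q$, write $g=\delta g_1$ with $g_1\in\mathfrak{A}$; primeness of $Q$ together with $\delta\notin Q$ forces $g_1\in Q\subseteq M_{n,p}=\delta\,\mathfrak{A}$, and iterating yields $g=\delta^{k}g_k$ with $g_k\in Q$ for every $k\geq1$. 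Evaluating at $n$ gives $p^{k}\mid g(n)$ for all $k$, so $g(n)=0$; hence $Q\subseteq P_n$. To conclude that $Q=P_n$, I would apply Theorem \ref{th3}(i) to the nonzero prime $Q$: its hypotheses hold because $n\in\bigcap_{f\in Q}Z(f)$ (every $f\in Q$ vanishes at $n$) and $\bigcap_{f\in Q}\Pi(f)\neq\emptyset$ (each $\delta_{n,q}$, $q$ prime, divides every function vanishing at $n$, and is irreducible), so $Q=\{f\in\mathfrak{A}:f(a)=0\}$ for some $a\in\mathbb{C}$. As $Q$ is not maximal (being properly contained in the proper ideal $M_{n,p}$) we must have $a\in\mathbb{Z}$, since for $a\notin\mathbb{Z}$ the ideal $\{f:f(a)=0\}=M_a$ is maximal; and then $z-a\in Q\subseteq P_n$ forces $a=n$, i.e.\ $Q=P_n$.

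The only genuinely load-bearing step is the passage ``$Q\subsetneq M_{n,p}\Rightarrow Q\subseteq P_n$'', which is precisely what the principality $M_{n,p}=\delta\,\mathfrak{A}$ buys; the rest is the ideal correspondence for $\mathfrak{A}/P_n\cong\mathbb{Z}$ together with a direct appeal to Theorem \ref{th3}. A minor point to watch is the convention about the zero ideal hidden in ``one and only one proper prime ideal''. Alternatively, one could avoid Theorem \ref{th3} in the last step and argue directly, exactly as in its proof: from $0\neq f=(z-n)^{r}\ell\in Q$ one first deduces $z-n\in Q$, and then, for $b\in P_n$, writing $b^{2}=(z-n)a$ and clearing denominators by a non-vanishing $\beta$ with $\beta(n)=1$, one gets $\beta b^{2}=(z-n)(\beta a)\in Q$, whence $b\in Q$ and $Q=P_n$.
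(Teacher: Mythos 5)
Your argument is correct, and it splits into a half that mirrors the paper and a half that is genuinely cleaner. For the uniqueness of the prime below $M_{n,p}$ you and the paper do essentially the same thing: the paper takes the irreducible $h=\delta_{n,p}$, writes an $f\in P$ with $f(n)\neq0$ as $f=h^{r}g$ with $\gcd(g(n),p)=1$, and gets the contradiction $h\in P$, i.e.\ $P=M_{n,p}$; your explicit observation that $M_{n,p}=\delta_{n,p}\mathfrak{A}$ together with the iteration $g=\delta^{k}g_{k}$ (so $p^{k}\mid g(n)$ for all $k$) is the same mechanism packaged differently, and both proofs then finish by invoking Theorem \ref{th3} (your check that its hypotheses hold — $n\in\bigcap Z(f)$ and $\delta_{n,q}\in\bigcap\Pi(f)$ — is a useful explicitation the paper omits, as is your inline alternative reproducing the relevant case of that theorem's proof). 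For the second assertion your route is genuinely different and simpler: the paper argues by hand that any maximal $N\supseteq P_n$ contains some $f$ with $f(n)\neq\pm1$, factors $f(n)$ into primes, and uses Lemma \ref{l2} to force some irreducible $h_j$ into $N$; you instead note that evaluation at $n$ gives $\mathfrak{A}/P_n\cong\mathbb{Z}$ and read off the maximal ideals over $P_n$ as the preimages $M_{n,p}=\varepsilon^{-1}(p\mathbb{Z})$ by the lattice correspondence, which also yields for free that $P_n$ is prime and non-maximal and that $P_n\subsetneq M_{n,p}$. Your caveat about the zero ideal is fair (it is a prime properly contained in $M_{n,p}$, so the statement must be read as excluding it), and the paper's proof tacitly makes the same exclusion when it applies Theorem \ref{th3}; your appeal to the maximality of $M_a$ for $a\notin\mathbb{Z}$ uses only what the paper asserts at the start of Section 3.
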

\begin{proof}
  Let $P$ be a proper prime ideal included in $M_{n,p}$.  Suppose that there   is  $f\in P$   with $f(n)\neq 0$ and write  $f(n)=p^r q$ with $gcd(p, q)=1$. Let $h$ be an irreducible element of $\mathfrak{A}$ such that $ h(n)=p$ and $h(k)=1$ for $k\in \mathbb{Z}$, $k\neq n$. Then    $f=h^r g$ with
$gcd( g(n), p)=1$ and $g\notin M$. As $P$ is a prime ideal  it follows that   $h \in  P$ and  hence $P=M_{n,p}$, contradiction.
We conclude that $P= \{f\in \mathfrak{A},\; f(n)=0 \}$ by using  Theorem \ref{th3}.
 \par Let now $N$ be a (proper)  maximal ideal that  contains $P_n$ for some $n\in \mathbb{Z}$. Let $f\in N$ and $f\notin P_n$. We will show that $f(n)\neq \pm1$. In fact,  suppose   that $f(n)=\pm 1$ and  choose
a none vanishing entire function $\alpha $  with  $\alpha(k)=(k-n)^{-1}$,  for all $k\in \mathbb{Z}$, $k\neq n$  and $\alpha(n)=1$.  Put  $h=(z-n)\alpha\in P_n$.
We have $Z(h)\cap Z(f)=\emptyset$ and $gcd( h(k), f(k))=1$, for all $k\in \mathbb{Z}$ . But  this leads to
 $N= \mathfrak{A}$,  thanks to Lemma \ref{l2}, so we must have $f(n)\neq\pm 1$. Now  decompose into prime factors $f(n)=\pm p_1^{r_1}p_2^{r_2}...p_\ell^{r_\ell}$
 and  let $h_1,h_2...,h_\ell$ be  irreducible elements of $\mathfrak{A}$ such that $h_j(n)=p_j$, $j=1,2,...,\ell$. Thus one can write
$f=h_1^{r_1}h_2^{r_2}...h_\ell^{r_\ell}g $ with $g(n)=1$. As $g\notin N$, the    $N$ contains one of $h_j$ and    thus
$N=N_{n,p_j}\{f\in \mathfrak{A},\; f(n)\equiv0( p_j)\}$, again thanks to Lemma \ref{l1}.
\end{proof}
\begin{thm}\label{th4}
   Let $P$ be a prime ideal of $\mathfrak{A}$   containing   a function   which  does not vanish on $\mathbb{Z}$ and let $f\in \mathfrak{A}$
   with $m(f)=1$.  Then the ideal  $M=(P,f)$ is   maximal.
\end{thm}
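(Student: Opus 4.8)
The plan is to show that $M$ is \emph{prime}; Theorem~\ref{th2} then finishes the job, since $f\in M$ and $m(f)=1$ force a prime $M$ to be maximal (and a prime ideal is proper, so the degenerate possibility $M=\mathfrak A$ does not interfere). If $f\in P$ there is nothing to prove, so I would assume $f\notin P$ and pass to the integral domain $R=\mathfrak A/P$, writing $\bar h$ for the image of $h\in\mathfrak A$. Since $M/P=\bar fR$, the whole task becomes: $\bar fR$ is a prime ideal of $R$, where $\bar f\neq0$.

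First I would record that $R$ is a Bézout domain. A finitely generated ideal of $R$ is the image of some $(P,h_1,\dots,h_r)$; since $g_0\in P$ satisfies $g_0(n)\in\mathbb Z\setminus\{0\}$ for every $n$, in Theorem~\ref{th1} one may take $j_n$ to be the index of $g_0$ for all $n$, so $(g_0,h_1,\dots,h_r)=(d)$ is principal, $d$ divides each $h_i$, and $(P,h_1,\dots,h_r)=(P,d)$, whose image in $R$ is $\bar dR$. Next, because $m(f)=1$ all zeros of $f$ are simple and each $f(n)$ is square-free, which has the useful consequence that for every $h\in\mathfrak A$ the quotient $f/\gcd(f,h)$ has zero set disjoint from $Z(h)$ and values coprime to those of $h$, so $f/\gcd(f,h)$ and $h$ are comaximal in $\mathfrak A$ (Lemma~\ref{l2}). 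Applying this with $h=g_0$ and putting $d_0=\gcd(g_0,f)$, the class $\overline{f/d_0}$ is comaximal with $\bar g_0=0$ in $R$, hence is a unit of $R$; so $\bar f\sim\bar d_0$ in $R$, where $d_0\mid g_0$, $m(d_0)=1$, and (as $\bar d_0\neq0$) $g_0/d_0\in P$. Thus I may as well work with $d_0$ in place of $f$, i.e. assume from now on that $f\mid g_0$ and $g_0/f\in P$.

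Next I would analyse the divisors of $\bar f$. If $\bar e\mid\bar f$ in $R$, the same comaximality applied with $h=e$ shows $\bar e$ is comaximal with $\overline{f/\gcd(e,f)}$, whence $\bar e\mid\overline{\gcd(e,f)}$; since also $\gcd(e,f)\mid e$, this gives $\bar e\sim\overline{\gcd(e,f)}$. So, replacing $e$ by $e_0:=\gcd(e,f)$, every divisor of $\bar f$ in $R$ is (up to associates) of the form $\bar e_0$ with $e_0\mid f$, $m(e_0)=1$, and $f=e_0\varphi$ where $e_0$ and $\varphi$ are comaximal in $\mathfrak A$; note also $e_0\mid f\mid g_0$ and $\varphi\mid g_0$, so $g_0/e_0,\ g_0/\varphi\in P$. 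Consequently, to prove that $\bar f$ is prime—equivalently, since $R$ is Bézout, irreducible—it suffices to show: whenever $f=e_0\varphi$ with $e_0,\varphi$ comaximal in $\mathfrak A$, one of $\bar e_0,\bar\varphi$ is a unit of $R$. Granting this, $\bar fR$ is a maximal ideal of $R$, hence in particular prime, and by the first paragraph $M=(P,f)$ is maximal.

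The step I expect to be the genuine obstacle is exactly this last claim. Comaximality of $e_0$ and $\varphi$ in $\mathfrak A$ passes to $R$, but on its own only says $\bar e_0R+\bar\varphi R=R$, which is far from ``one factor is a unit''. What must be exploited is the extra ingredient coming from the hypothesis on $P$: since $e_0\varphi=f\mid g_0$ and $g_0/f=g_0/(e_0\varphi)\in P$, one has, modulo $P$, a factorization of $0=\bar g_0$ through the comaximal pair $\bar e_0,\bar\varphi$ with a common complement lying in $P$; I would try to combine $\bar e_0\mid\bar g_0$, $\bar\varphi\mid\bar g_0$, $\overline{g_0/e_0}=\overline{g_0/\varphi}=0$ and the comaximality of $\bar e_0,\bar\varphi$ to force $g_0/e_0$ and $g_0/\varphi$ to be associates in $R$ of $\bar\varphi$, resp. $\bar e_0$, which collapses the factorization. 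Making this precise is the heart of the matter, and it is where the two hypotheses are both indispensable: ``$m(f)=1$'' supplies the square-free/simple-zero structure that makes all the gcd manipulations legitimate and renders $e_0,\varphi$ comaximal already in $\mathfrak A$, while ``$g_0$ does not vanish on $\mathbb Z$'' is what makes $R$ Bézout and $\overline{f/d_0}$ a unit. Everything else in the argument is routine bookkeeping with Theorems~\ref{th1}–\ref{th2} and Lemma~\ref{l2}.
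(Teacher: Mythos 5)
Your reduction is sound as far as it goes: passing to $R=\mathfrak A/P$, observing that $R$ is a B\'ezout domain because $(g_0,h_1,\dots,h_r)$ is principal by Theorem~\ref{th1} whenever $g_0\in P$ does not vanish on $\mathbb Z$, and showing that every divisor of $\bar f$ in $R$ is, up to associates, the class of a divisor $e_0$ of $f$ in $\mathfrak A$ with $f=e_0\varphi$ and $e_0,\varphi$ comaximal (this uses $m(f)=1$ together with Lemma~\ref{l2}, and is essentially the bookkeeping the paper does in the second half of its proof). But you then isolate the decisive claim --- for any such factorization one of $\bar e_0,\bar\varphi$ is a unit of $R$ --- and explicitly leave it unproved; this is not routine, it is the whole theorem. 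Moreover the sketch you offer cannot be made to work as stated: since $g_0/e_0$ and $g_0/\varphi$ lie in $P$, their classes are $0$ in $R$, so forcing them to be associates of $\bar\varphi$, resp.\ $\bar e_0$, would force $\bar\varphi=0$ or $\bar e_0=0$, i.e.\ $f\in P$, contradicting your standing assumption $f\notin P$. Divisibility relations inside $R$ simply do not see the arithmetic of the integer values $g_0(n)$ at the primes dividing $e_0(n)$ versus those dividing $\varphi(n)$, and that is exactly where the hypothesis on $P$ has to act.

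What is missing is the paper's key construction. Given $f=uv$ (your $e_0\varphi$) and $g\in P$ not vanishing on $\mathbb Z$, one builds by interpolation (Lemma~\ref{l1}, Corollary~\ref{cor2.2}, Weierstrass) a function $h\in\mathfrak A$ whose zeros are the common zeros of $u$ and $g$ with the multiplicities they have in $g$, and whose values are $h(n)=\prod_{p\mid u(n)}p^{v_p(g(n))}$ (and $h(n)=1$ when $u(n)=\pm1$); then $g=hr$ with $(h,v)=(r,u)=\mathfrak A$, and primeness of $P$ applied to $hr=g\in P$ gives $h\in P$ or $r\in P$. Writing $1=cv+dh$ (resp.\ $1=ar+bu$) and reducing modulo $P$ yields precisely your missing conclusion that $\bar v$ (resp.\ $\bar u$) is a unit of $R$ --- in the paper's phrasing, $u\in M$ or $v\in M$, after which an argument like your own (or the paper's second step) finishes. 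Without this splitting of $g$ according to the irreducible divisors of $u$ and of $v$, your argument does not close, so the proposal has a genuine gap at its core. (A minor caveat you share with the paper: nothing rules out $(P,f)=\mathfrak A$, e.g.\ $P=M_{0,3}$ and $f=2$, so your remark that the degenerate case ``does not interfere'' is too quick; in that case $M$ is not prime and the statement must be read as implicitly excluding it.)
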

\begin{proof}
 It's enough to show that $M$ is a prime ideal.  Suppose  first    $f=uv$ for some $u,v\in \mathfrak{A}$ and show that  $u\in M$ or $v\in M$.
 In fact, let  $g\in P$ that does not vanish on $\mathbb{Z}$.
  Taking $ h \in  \mathfrak{A} $ as follows:   the set of zeros of $h$ is the intersection of those of $u$ and $g$ 
  and each zero has the same  order  as zero of g and further for $n\in \mathbb{Z}$,
$$h(n)=\left\{
    \begin{array}{ll}
     \prod_{p /u(n)}p^{v_p(g(n))}, & \hbox{ if $u(n)\neq 1$;} \\
     1, & \hbox{ if $u(n)=1$.}
    \end{array}
  \right.
$$
where the product is taken on the  prime integers divisors of $u(n)$ and $v_p(g(n))=\max \{e;\; p^e \;divides \; g(n)\}$.  Now clearly $g$   is   written  as  $g=hr$  with    $(h,v)=(r,u)=\mathfrak{A} $. Hence  $u=\alpha f+\beta h u$ and $v=\theta f+\lambda rv$, for some $\alpha,\beta,\theta,\lambda\in \mathfrak{A}$. As $P$ is a prime ideal of $\mathfrak{A}$ thus we have that   $h\in P$ or $r\in P$ and  consequently     $u\in M$ or $v\in M$.
\par Now suppose that we have $f_1f_2\in M$  with
\begin{equation}\label{e2}
  f_1f_2= af+p
\end{equation}
for some $a\in  \mathfrak{A}$ and  $p\in P$.  Suppose  further that $f_1\notin M$ and let
     $f_1=d f_1^*$ and $ f=d f^*$ with  $(f_1^*,f^*)= \mathfrak{A}$. So, from  the above   $f^*$ must be in $M$.   Now  in view of (\ref{e2}),  $d(f_1^* f_2-af^*)=p$,  then  $f_1^* f_2-af^*\in P$ and $f_1^* f_2\in M$.  However,  if we write
1= $\mu f_1^*+\nu f^*$, some $\mu,\nu\in \mathfrak{A}$,  then we obtain that  $f_2=\mu f_2f_1^*+\nu f_2f^*\in M$. This achieves the proof of the theorem.
\end{proof}
\begin{cor}
 Let $P$ be a prime  ideal of $\mathfrak{A}$   containing  a function  that   does not vanish on $\mathbb{Z}$. Then $P$ is included in one and only one maximal ideal of $\mathfrak{A}$.
\end{cor}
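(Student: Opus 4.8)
The plan is as follows. Existence is immediate: a prime ideal is proper, so by Zorn's lemma $P$ lies inside some maximal ideal. The substance is uniqueness, which I would prove by contradiction. Assume $M_1\neq M_2$ are maximal ideals both containing $P$. The whole strategy is to manufacture a single $\varphi\in M_1\cap M_2$ with $m(\varphi)=1$: once this is in hand, Theorem~\ref{th4} makes $(P,\varphi)$ a maximal ideal, and since $P\subseteq M_1\cap M_2$ and $\varphi\in M_1\cap M_2$ we get $(P,\varphi)\subseteq M_1$ and $(P,\varphi)\subseteq M_2$, hence $M_1=(P,\varphi)=M_2$, contradicting $M_1\neq M_2$.

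To construct $\varphi$ I would first normalize the generators. Fix $g\in P$ with no zero in $\mathbb{Z}$ (the hypothesis), and choose $f_i\in M_i$ with $m(f_i)=1$ (the lemma just before Theorem~\ref{th2}). Applying Corollary~\ref{cgd} to $\{f_1,g\}\subseteq M_1$ produces a common divisor $d\in M_1$ of $f_1$ and $g$; since $d\mid f_1$ and $d$ is a non-unit (it lies in the proper ideal $M_1$) we have $m(d)=1$, and since $d\mid g$ the function $d$ has no zero in $\mathbb{Z}$. Replacing $f_1$ by $d$, I may assume from now on that $f_1$ has no zero in $\mathbb{Z}$, with $m(f_1)=1$ still; by Theorem~\ref{th4}, $(P,f_i)=M_i$ for $i=1,2$. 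Consequently $f_2\notin M_1$ (otherwise $M_2=(P,f_2)\subseteq M_1$ gives $M_1=M_2$) and, symmetrically, $f_1\notin M_2$.

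Because $f_1$ has no zero in $\mathbb{Z}$, we have $Z(f_1)\cap Z(f_2)\cap\mathbb{Z}=\emptyset$, so Lemma~\ref{l3} applies to the pair $(f_1,f_2)$: there is $d^{*}\in\mathfrak{A}$ with $f_1\mathfrak{A}+f_2\mathfrak{A}=d^{*}\mathfrak{A}$, and its proof exhibits $g_1:=f_1/d^{*}$ and $g_2:=f_2/d^{*}$ in $\mathfrak{A}$ satisfying $Z(g_1)\cap Z(g_2)=\emptyset$ and $\gcd(g_1(n),g_2(n))=1$ for all $n\in\mathbb{Z}$. Since $f_2\in d^{*}\mathfrak{A}$ and $f_2\notin M_1$ we obtain $d^{*}\notin M_1$, and then $f_1=d^{*}g_1\in M_1$ together with primeness of $M_1$ forces $g_1\in M_1$; symmetrically $g_2\in M_2$. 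Put $\varphi:=g_1g_2\in M_1\cap M_2$. By the classification of irreducible elements (Proposition~\ref{p1}), the two coprimality conditions on $g_1,g_2$ prevent any irreducible of $\mathfrak{A}$ from dividing both of them; combined with $m(g_i)\le m(f_i)=1$, this shows every irreducible divides $\varphi$ at most once, i.e. $m(\varphi)\le1$, and as $\varphi\in M_1$ is a non-unit, $m(\varphi)=1$. This is exactly the function required by the first paragraph, so the proof is complete.

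The step I expect to be the real obstacle is the cancellation of the common part of $f_1$ and $f_2$ in the third paragraph. One is tempted to strip off common irreducible divisors one at a time, but in this non-Noetherian ring that recursion need not terminate: a function with $m(f)=1$ can have infinitely many irreducible divisors, and $g$ may vanish to unbounded order along their zeros, so no finite descent reaches a coprime pair. The way around this is to remove the entire common part in one stroke, using the principal generator $d^{*}$ provided by Lemma~\ref{l3}. It is precisely in order to invoke Lemma~\ref{l3} --- that is, to guarantee $f_1$ and $f_2$ have no common zero in $\mathbb{Z}$ --- that the hypothesis ``$P$ contains a function not vanishing on $\mathbb{Z}$'' is indispensable; dropping it, the conclusion genuinely fails, since for every $n\in\mathbb{Z}$ the prime ideal $P_n=\{f\in\mathfrak{A}:f(n)=0\}$ is contained in all of the maximal ideals $M_{n,p}$.
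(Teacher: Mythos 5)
Your argument is correct, and it diverges from the paper's proof at the decisive step. Both proofs begin the same way: write $M_i=(P,f_i)$ with $m(f_i)=1$ via Theorem~\ref{th4}, and extract the principal generator $d^{*}$ of $(f_1,f_2)$ together with the cofactors $g_1=f_1/d^{*}$, $g_2=f_2/d^{*}$. From there the paper argues by cases on whether $d^{*}$ lies in $M_1$ or $M_2$, and in the remaining case splits the non-vanishing $g\in P$ as $g=g_1g_2$ with $(g_1,f_1^{*})=(g_2,f_2^{*})=\mathfrak{A}$ (a construction of the same kind as the $h$ built in the proof of Theorem~\ref{th4}), then uses primeness of $P$ to force $g_1\in P$ or $g_2\in P$ and hence $M_1=\mathfrak{A}$ or $M_2=\mathfrak{A}$. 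You instead observe that $M_1\neq M_2$ directly forces $d^{*}\notin M_1\cup M_2$, hence $g_1\in M_1$, $g_2\in M_2$ by primeness of the maximal ideals, and then apply Theorem~\ref{th4} a second time to the single element $\varphi=g_1g_2\in M_1\cap M_2$ with $m(\varphi)=1$, pinning $M_1=(P,\varphi)=M_2$. What your route buys is that it bypasses the coprime splitting of $g$, which is the least explicit part of the paper's argument, replacing it by the verification that $m(g_1g_2)=1$; that verification does use two small facts you pass over quickly — no irreducible divides both $g_1$ and $g_2$ because of the conditions $Z(g_1)\cap Z(g_2)=\emptyset$ and $\gcd(g_1(n),g_2(n))=1$ coming from the construction in Lemma~\ref{l3}, and $o(\delta,g_1g_2)=o(\delta,g_1)+o(\delta,g_2)$ because irreducibles of $\mathfrak{A}$ are prime — but both are available in the paper and the step is sound. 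Two minor remarks: your second paragraph (replacing $f_1$ by a common divisor of $f_1$ and $g$ via Corollary~\ref{cgd}) is superfluous, since $m(f_1)=1$ already forces $f_1$ to be non-vanishing on $\mathbb{Z}$ (a function vanishing at an integer $n$ is divisible by every power of the irreducibles $\delta_{n,p}$, as the paper notes parenthetically in Corollary~\ref{cgd}); and ``$\varphi$ is a non-unit, hence $m(\varphi)=1$'' tacitly uses that every non-unit without integer zeros has at least one irreducible divisor, which follows from Proposition~\ref{p1} and is worth a word.
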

\begin{proof}
Suppose that we have $M_1$ and $M_2$ two proper maximal ideals of $ \mathfrak{A}$ containing $P$. Let $f_1\in M_1$ and $f_2\in M_2$ with
$m(f_1)=m(f_2)=1$. Then from Theorem \ref{th4}, $M_1=(P,f_1)$ and  $M_2=(P,f_2)$. Now  let $(d)=(f_1,f_2)$ and write
$f_1=df_1^*$, $f_2=df_2^*$ with $ ( f_1^*, f_2^*)=\mathfrak{A}$. Suppose that $d\notin M_1$ and $d\notin M_2$ then we have $M_1=(P,f_1^*)$ and  $M_2=(P,f_2^*)$. Now for a given $g\in P$ that  does not vanish on $\mathbb{Z}$  we can  split $g=g_1g_2$ with $(g_1,f_1^*)=(g_2,f_2^*)=\mathfrak{A}$.
 But  this yields  that    $M_1=\mathfrak{A}$ or $ M_2=\mathfrak{A}$,  since  we have $g_1\in P$ or $g_2\in P$ which   contradicts the fact that  $M_1$ and $M_2$ are proper ideals of $\mathfrak{A}$. Therefore we have  $d\in M_1$ or  $d\in M_2$  which imply that $M_1=M_2$.
\end{proof}
\begin{thm}
  Let $P$ be a prime  ideal of $\mathfrak{A}$   containing  a function  that   does not vanish on $\mathbb{Z}$. The residue class ring  $\mathfrak{A}/P$  is a valuation ring whose the unique maximal  ideal $M/P$ is principal.
\end{thm}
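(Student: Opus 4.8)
The plan is to derive the statement from the two corollaries to Theorems~\ref{th4} and~\ref{th1} proved above, together with the elementary fact that a local domain in which every finitely generated ideal is principal is a valuation ring.

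First I would observe that $\mathfrak{A}/P$ is a local domain: it is a domain since $P$ is prime, and by the corollary to Theorem~\ref{th4} the ideal $P$ lies inside a unique maximal ideal $M$, so $M/P$ is the only maximal ideal of $\mathfrak{A}/P$. To see that $M/P$ is principal I would pick, using Theorem~\ref{th2}, a function $f\in M$ with $m(f)=1$; then $(P,f)$ is maximal by Theorem~\ref{th4}, and since $P\subseteq (P,f)\subseteq M$ this forces $M=(P,f)$. Writing $\bar g$ for the class of $g\in\mathfrak{A}$ modulo $P$, this gives $M/P=\bar f\,(\mathfrak{A}/P)$, a principal ideal.

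The heart of the argument is to show that every finitely generated ideal of $\mathfrak{A}/P$ is principal. Given $g_1,\dots,g_r\in\mathfrak{A}$, I would fix a function $f_0\in P$ that does not vanish on $\mathbb{Z}$ (such an $f_0$ exists by the standing hypothesis on $P$). Since $f_0(n)\in\mathbb{Z}\setminus\{0\}$ and $g_j(n)\in\mathbb{Z}$ for every $n\in\mathbb{Z}$, the quotient $(g_j/f_0)(n)$ lies in $\mathbb{Q}$ for all $j$ and all $n$; hence the family $f_0,g_1,\dots,g_r$ satisfies the criterion of Theorem~\ref{th1} with the index $j_n$ in that criterion taken to be the index of $f_0$ for every $n$. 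Therefore the ideal $(f_0,g_1,\dots,g_r)$ of $\mathfrak{A}$ is principal, say equal to $(d)$. Reducing modulo $P$ and using $\bar f_0=0$, this yields $(\bar g_1,\dots,\bar g_r)=(\bar d)$ in $\mathfrak{A}/P$, so every finitely generated ideal of $\mathfrak{A}/P$ is principal.

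Finally I would invoke the standard argument: for nonzero $x,y\in\mathfrak{A}/P$ write $(x,y)=(d)$, so $x=ad$, $y=bd$ and $d=ux+vy$ for suitable elements; then $(1-ua-vb)d=0$ with $d\neq0$ forces $ua+vb=1$, and because $\mathfrak{A}/P$ is local one of $ua,vb$ must be a unit, whence one of $a,b$ is a unit, i.e.\ $x\mid y$ or $y\mid x$. Thus the ideals of $\mathfrak{A}/P$ are totally ordered by inclusion, so $\mathfrak{A}/P$ is a valuation ring, with principal maximal ideal $M/P$ by the first step. I expect the only point requiring attention to be the verification of the hypothesis of Theorem~\ref{th1} for the enlarged family in the third step — but this is painless precisely because $f_0$ is non-vanishing on $\mathbb{Z}$, which is the defining feature of the primes $P$ under consideration.
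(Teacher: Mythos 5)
Your proposal is correct, and it rests on the same two pillars as the paper's proof: the uniqueness of the maximal ideal $M\supseteq P$ (corollary to Theorem \ref{th4}) and the principality criterion of Theorem \ref{th1} applied to a family enlarged by a function $f_0\in P$ that does not vanish on $\mathbb{Z}$ (so that all the ratios $(g_j/f_0)(n)$ are rational). The packaging differs: the paper works with representatives, taking $f,g\in M\setminus P$, $h\in P$ non-vanishing on $\mathbb{Z}$, writing $(f,g,h)=(d)$ and arguing that $d\notin P$, $h/d\in P$, hence $f/d$ or $g/d$ lies outside $M$ and is a unit modulo $P$, which gives the divisibility of the residues directly; you instead pass to the quotient first, show $\mathfrak{A}/P$ is a local domain in which every finitely generated ideal is principal, and invoke the standard argument that such a ring is a valuation ring. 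These are the same computation seen from two sides, so nothing is gained or lost there except that your version isolates the abstract lemma cleanly. Where you genuinely diverge, and in fact improve on the paper, is the principality of $M/P$: the paper merely asserts that any $f\in M\setminus M^2$ generates $M/P$, with no justification, while you obtain $M=(P,f)$ for an $f\in M$ with $m(f)=1$ (existence via Theorem \ref{th2} or the lemma preceding it, maximality of $(P,f)$ via Theorem \ref{th4}, then the sandwich $P\subseteq(P,f)\subseteq M$), which gives $M/P=\bar f\,(\mathfrak{A}/P)$ with a complete argument. One cosmetic remark: the paper assumes $P$ not maximal at the outset; your argument handles the case $P=M$ automatically, since then $M/P=0$ and the quotient is a field.
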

\begin{proof}
We suppose that $P$  is not maximal. Let us  denote $\bar{f}$ the class modulo $P$ of $f\in \mathfrak{A}$. Noting that   $\mathfrak{A}/P$  is a valuation ring if and only if for all
$f,g\in \mathfrak{A}$ we have $\bar{f}$ divides $\bar{g}$ or $\bar{g}$ divides  $\bar{f}$.
 Let $M$ be the maximal ideal containing $P$. As $M$ is the only maximal ideal containing $P$ then  for all  $f\notin M$    the ideal $(P,f)=\mathfrak{A} $ and there exists $h\in \mathfrak{A} $ such that $fh-1\in P$, implies that   $\bar{f}$ is  a unit in the ring $\mathfrak{A}/P$.
Let now  $f, g\in M\setminus P$ and $h\in P$  that does not vanish on $\mathbb{Z}$. Then if we let  $(d)= (f,g,h) $,  we  must have  that  $d\notin P$ and  $h/d\in P$, which imply that  $f/d$ or  $g/d$ does not belong to $M$ and thus is a unit modulo $P$. Hence we conclude that   $\bar{f}$ divides $\bar{g}$ or $\bar{g}$ divides  $\bar{f}$.  Further, if we  take  $f\in M\setminus M^2$ then it is  clearly that $M/P$ is generated by $(\bar{f})$.
\end{proof}
\begin{thm}
  Let $\mathfrak{E}$ be the ring of entire functions.
\\(i) The mapping $P\longrightarrow P\cap \mathfrak{A}$  is injective that does not surjective from the set of prime ideals of $\mathfrak{E}$
into  the set of prime ideals of $\mathfrak{A}$
\\(ii) If $M$  is  a maximal ideal of $\mathfrak{E}$ then $M\cap \mathfrak{A}$ is a maximal ideal of
$\mathfrak{A}$ if and only if $M$ contains  a function   which does not vanish on $\mathbb{Z}$.
\end{thm}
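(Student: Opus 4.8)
The whole argument rests on two features of the inclusion $\mathfrak{A}\subset\mathfrak{E}$: a nowhere-vanishing entire function is a unit of $\mathfrak{E}$, and, by Corollary~\ref{cor2.2}, any $f\in\mathfrak{E}$ may be multiplied by such a unit so that the product lands in $\mathfrak{A}$. First I would dispatch well-definedness: if $P$ is prime in $\mathfrak{E}$ then $P\cap\mathfrak{A}$ is a proper prime ideal of $\mathfrak{A}$, because $1\notin P$ and $ab\in P\cap\mathfrak{A}$ with $a,b\in\mathfrak{A}$ forces $a\in P$ or $b\in P$, hence $a$ or $b\in P\cap\mathfrak{A}$.

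For (i) the crucial identity is $P=(P\cap\mathfrak{A})\,\mathfrak{E}$ for every ideal $P$ of $\mathfrak{E}$. The inclusion $\supseteq$ is clear; for $\subseteq$, given $f\in P$ choose, via Corollary~\ref{cor2.2}, a nowhere-vanishing entire $h$ with $h(n)=1/f(n)$ when $f(n)\neq0$ and $h(n)=1$ otherwise, so that $fh\in\mathfrak{A}\cap P$, and then $f=(fh)h^{-1}\in(P\cap\mathfrak{A})\mathfrak{E}$ because $h$ is a unit of $\mathfrak{E}$. Injectivity is then immediate: $P\cap\mathfrak{A}=Q\cap\mathfrak{A}$ gives $P=(P\cap\mathfrak{A})\mathfrak{E}=(Q\cap\mathfrak{A})\mathfrak{E}=Q$. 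For non-surjectivity I would note that a contraction $P\cap\mathfrak{A}$ of a \emph{proper} ideal of $\mathfrak{E}$ can contain no unit of $\mathfrak{E}$, whereas the maximal ideal $M_{n,p}$ of $\mathfrak{A}$ does: Corollary~\ref{cor2.2} furnishes a nowhere-vanishing $h\in\mathfrak{A}$ with $h(n)=p$ and $h(k)=1$ for $k\neq n$, and this $h$ is a unit of $\mathfrak{E}$ lying in $M_{n,p}$. Hence $M_{n,p}$ is a prime (indeed maximal) ideal of $\mathfrak{A}$ outside the image.

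For (ii), the direction $\Rightarrow$ is quick: if $M\cap\mathfrak{A}$ is maximal in $\mathfrak{A}$, then by Theorem~\ref{th2} it contains an $f$ with $m(f)=1$; such an $f$ has square-free, in particular nonzero, values at every integer, so $f$ is a member of $M$ not vanishing on $\mathbb{Z}$. For $\Leftarrow$, take $g\in M$ with $Z(g)\cap\mathbb{Z}=\emptyset$; note $g$ has at least one zero, since otherwise $g$ would be a unit of $\mathfrak{E}$ and $M=\mathfrak{E}$. Using Weierstrass' theorem together with Corollary~\ref{cor2.2} I would build an entire $\mathfrak{g}$ whose zeros, as a point set, are exactly those of $g$ but all simple, with $\mathfrak{g}(n)=1$ for all $n\in\mathbb{Z}$; thus $\mathfrak{g}\in\mathfrak{A}$ and $m(\mathfrak{g})=1$. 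Once $\mathfrak{g}\in M$ is established, $\mathfrak{g}\in M\cap\mathfrak{A}$ has $m(\mathfrak{g})=1$ and Theorem~\ref{th2} yields that $M\cap\mathfrak{A}$ is maximal.

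The genuine obstacle is showing $\mathfrak{g}\in M$, since a priori one only knows that $\mathfrak{g}$ divides $g$ in $\mathfrak{E}$, which does not by itself put $\mathfrak{g}$ in $M$. The plan is to use maximality of $M$: if $\mathfrak{g}\notin M$ then $\mathfrak{g}\mathfrak{E}+M=\mathfrak{E}$, so $c\mathfrak{g}+b=1$ for some $c\in\mathfrak{E}$ and $b\in M$; then $\mathfrak{g}$ and $b$ share no zero, and since $g$ and $\mathfrak{g}$ vanish at exactly the same points, $g$ and $b$ share no zero either. Theorem~9 of \cite{OH}---the result already invoked in the proof of Lemma~\ref{l2}---then provides $r,s\in\mathfrak{E}$ with $rg+sb=1$, contradicting $g,b\in M$. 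Hence $\mathfrak{g}\in M$, which finishes (ii).
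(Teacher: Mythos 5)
Your proposal is correct, but it follows a genuinely different route from the paper's in most of its steps. For injectivity you use the same basic device as the paper (multiply an element of $P$ by a nowhere-vanishing entire function, a unit of $\mathfrak{E}$, to push it into $P\cap\mathfrak{A}$), though you package it as the identity $P=(P\cap\mathfrak{A})\mathfrak{E}$, which is a cleaner formulation. For non-surjectivity the paper works with $M_{0,2}$ and compares it with the fixed maximal ideal $I=\{f\in\mathfrak{E}:\ f(0)=0\}$, using the remark that inclusion of contractions forces inclusion of the ideals, to reach a contradiction; your observation that $M_{n,p}$ already contains a unit of $\mathfrak{E}$ (the nowhere-vanishing $h$ with $h(n)=p$), while a contraction of a proper ideal cannot, is shorter and makes the obstruction transparent. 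In (ii) the paper proves the ``if'' direction directly: given $f\in\mathfrak{A}\setminus M$ with an entire inverse $g$ modulo $M$, it corrects $g$ to $H=g-\kappa h\in\mathfrak{A}$ using the function $h\in M$ that is nonzero on $\mathbb{Z}$, so that $Hf-1\in M$; that is, it verifies by hand that $\mathfrak{A}/(M\cap\mathfrak{A})$ is a field. You instead manufacture $\mathfrak{g}\in M\cap\mathfrak{A}$ with $m(\mathfrak{g})=1$ (Weierstrass plus Corollary~\ref{cor2.2}, and Theorem 9 of \cite{OH} to force $\mathfrak{g}\in M$ --- the same trick the paper uses inside $\mathfrak{A}$ in the lemma preceding Theorem~\ref{th2}) and then invoke Theorem~\ref{th2}; this is legitimate because $M\cap\mathfrak{A}$ is prime, as you noted at the outset. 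For ``only if'' the paper simply uses $2\notin M$ to produce $2d-1\in M\cap\mathfrak{A}$, which is odd hence nonzero on $\mathbb{Z}$, whereas you extract an $f$ with $m(f)=1$ and use that such an $f$ cannot vanish at an integer; that last fact deserves one explicit line (if $f(n)=0$ then $\delta_{n,p}^{2}$ divides $f$ for any prime $p$, so $m(f)\geq 2$), since ``square-free values'' alone might be read as permitting the value $0$. On balance the paper's route is more self-contained (it never needs the structure theory of maximal ideals of $\mathfrak{A}$ for this theorem), while yours buys brevity by leaning on Theorem~\ref{th2} and yields, as a by-product, an explicit element of $M\cap\mathfrak{A}$ with only simple zeros.
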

\begin{proof}
(i) Suppose  $P_1$ and $P_2$ are distinct prime ideals of $\mathfrak{E}$ and there is $u\in P_1$  and $ u\notin P_2$. Let $v\in \mathfrak{E}$ be  a none vanishing entire function such that $uv\in \mathfrak{A}$. Then $uv\in P_1\cap \mathfrak{A}$ and $uv \notin P_2\cap \mathfrak{A}$ and so  $ P_1\cap \mathfrak{A}\neq  P_2\cap \mathfrak{A}$.  For the surjectivity we note  first that
if $P_1\cap \mathfrak{A}\subset P_2\cap \mathfrak{A}$ then $P_1\subset P_2$. In fact, let $g\in P_1$ and $h\in \mathfrak{E}$ a none vanishing function
such that $hg\in \mathfrak{A}$. Then we have that $hg\in P_2\cap \mathfrak{A} $ and $g\in P_2$, since $h\notin P_2$.
Consider now the prime ideal of $\mathfrak{A}$, $M=\{f\in \mathfrak{A},\; f(0)\equiv 0(2)\}$ and suppose that $M= P\cap \mathfrak{A}$ where $P$ is a prime ideal of $\mathfrak{E}$. However $I=\{f\in \mathfrak{E},\; f(0)=0\}$ is a maximal ideal of $\mathfrak{E}$ and $I\cap \mathfrak{A}\subset P\cap\mathfrak{A}$ which implies that $I\subset P$ and  then $I=P$ which cannot hold.
\\(ii) Let $M$ be a  (proper) maximal ideal of $\mathfrak{E}$. We will  affirm   that  $\mathfrak{A}/\mathfrak{A}\cap M$ is a field.
  Let $h\in M$ assumed   not   vanish on $\mathbb{Z}$ and let $f\in \mathfrak{A}$, $f\notin M$. There exists $g\in\mathfrak{E}$ such that
$gf-1\in M$. Let $\kappa \in \mathfrak{E}$ such that
$$\kappa(n)=\frac{g(n)}{h(n)}, \quad n\in \mathbb{Z}.$$
Putting $L=\kappa h\in M$ and $H=g-L\in \mathfrak{A}$.
Thus we have $Hf-1=gf-1-Lf \in M$ and so $f$ is invertible in $\mathfrak{A}/\mathfrak{A}\cap M$.  To prove the converse, assume that $M\cap \mathfrak{A}$ is a maximal ideal of
$\mathfrak{A}$. Since the  constant function $ 2$ does not belong to   $M\cap \mathfrak{A}$, then there is $d\in \mathfrak{A}$ such that
$2d-1\in  M\cap \mathfrak{A}$ and this function does not vanish on $\mathbb{Z}$.
\end{proof}
\begin{thm}
  The ring $\mathfrak{A}$ is  integrally closed.
\end{thm}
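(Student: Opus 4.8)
The plan is to reduce the problem to an entire function by exploiting that the ambient ring $\mathfrak{E}$ of entire functions is already integrally closed, and then to control the values at the integers. Recall $\mathfrak{A}\subseteq\mathfrak{E}$, so the fraction field $\mathfrak{K}$ of $\mathfrak{A}$ is contained in $\mathrm{Frac}(\mathfrak{E})$, the field of meromorphic functions on $\mathbb{C}$. Let $x\in\mathfrak{K}$ be integral over $\mathfrak{A}$, say $x^{m}+a_{m-1}x^{m-1}+\dots+a_{0}=0$ with $a_{j}\in\mathfrak{A}$. Since $\mathfrak{A}\subseteq\mathfrak{E}$, $x$ is a fortiori integral over $\mathfrak{E}$; and $\mathfrak{E}$ is a B\'ezout domain (Theorem 9 of \cite{OH}), hence a GCD domain, hence integrally closed. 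Therefore $x\in\mathfrak{E}$, i.e.\ $x$ is an entire function, which I will call $f$. It then remains to prove that $f(n)\in\mathbb{Z}$ for every $n\in\mathbb{Z}$, for this gives $f\in\mathfrak{A}$.

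For this, evaluate the monic relation at $n\in\mathbb{Z}$: $f(n)^{m}+a_{m-1}(n)f(n)^{m-1}+\dots+a_{0}(n)=0$ with all $a_{j}(n)\in\mathbb{Z}$, so $f(n)$ is an algebraic integer. Consequently it is enough to show $f(n)\in\mathbb{Q}$, since a rational algebraic integer lies in $\mathbb{Z}$. Write $f=p/q$ with $p,q\in\mathfrak{A}$, $q\neq 0$. At every $n$ with $q(n)\neq 0$ one has $f(n)=p(n)/q(n)\in\mathbb{Q}$ at once, so only the integers lying in $Z(q)\cap\mathbb{Z}$ require attention.

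To handle such an $n$, the natural idea is to pass to a representation of $f$ whose denominator does not vanish there. Factoring $q=(z-n)^{k}q_{1}$ and $p=(z-n)^{k}p_{1}$ with $k=o(q,n)$ and $q_{1}(n)\neq 0$ (here $o(p,n)\geq k$ because $f$ is entire), one has $f=p_{1}/q_{1}$, where $p_{1},q_{1}$ take rational values at the integers $\neq n$; multiplying both by a suitable nonvanishing entire function furnished by Corollary \ref{cor2.2} one tries to push $p_{1},q_{1}$ back into $\mathfrak{A}$ while keeping $q_{1}(n)\neq0$, which would give $f(n)=p_{1}(n)/q_{1}(n)\in\mathbb{Q}$. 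An alternative is to localise: $\mathfrak{A}=\bigcap_{M}\mathfrak{A}_{M}$ over the maximal ideals $M$, and $\mathfrak{A}$ is integrally closed iff every $\mathfrak{A}_{M}$ is; for $M=M_{a}$ with $a\notin\mathbb{Z}$ one checks (using Corollary \ref{cor2.2} again) that $\mathfrak{A}_{M_{a}}$ is the ring of meromorphic germs holomorphic at $a$, a discrete valuation ring, while the ideals of type $M_{n,p}$ and the free maximal ideals (cf.\ Theorem \ref{th3}) would be treated via the ideal-theoretic machinery of \S3, in particular Theorem \ref{th1}, Lemma \ref{l2} and Corollary \ref{cgd}.

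The step I expect to be the main obstacle is precisely this last one: controlling $f(n)$ at those integers $n$ at which \emph{every} fractional representation of $f$ is forced to have a vanishing denominator. At such a point $f(n)$ is a priori only known to be an algebraic integer, and forcing it into $\mathbb{Q}$ (equivalently into $\mathbb{Z}$) — rather than allowing a genuine algebraic integer such as $\sqrt{2}$ — is the delicate heart of the matter, and this is exactly where the non-factorial structure of $\mathfrak{A}$ and the fine classification of its prime and maximal ideals obtained in \S3 must be brought to bear.
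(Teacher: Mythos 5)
Your reduction follows the paper's intended route exactly: identify $\mathrm{Frac}(\mathfrak{A})$ with the field of meromorphic functions (via $f/g=(f\sin(\pi z))/(g\sin(\pi z))$), use that $\mathfrak{E}$ is integrally closed to conclude the integral element is an entire function $f$, and then try to combine evaluation at integers with the fact that $\mathbb{Z}$ is integrally closed in $\mathbb{Q}$. But, as you yourself concede in your last paragraph, you never establish the decisive step, namely that $f(n)\in\mathbb{Q}$ at those integers $n$ at which every representation $p/q$ with $p,q\in\mathfrak{A}$ has $q(n)=0$ (your factorization $f=p_1/q_1$ only gives $p_1(n)=p^{(k)}(n)/k!$, which has no reason to be rational). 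So what you have is an honest reduction with an acknowledged hole, not a proof.

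Moreover, that hole cannot be filled, because with the paper's definitions the statement is false, and your diagnosis pinpoints exactly why. The constant function $\sqrt{2}$ lies in $\mathrm{Frac}(\mathfrak{A})$, since $\sqrt{2}=\frac{\sqrt{2}\,\sin(\pi z)}{\sin(\pi z)}$ and both numerator and denominator vanish at every integer, hence belong to $\mathfrak{A}$; it is integral over $\mathfrak{A}$, being a root of $X^{2}-2$ with coefficients in $\mathbb{Z}\subset\mathfrak{A}$; yet $\sqrt{2}\notin\mathfrak{A}$. Evaluating a monic relation at $n\in\mathbb{Z}$ only shows $f(n)$ is an algebraic integer, and nothing forces it to be rational --- this is precisely the ``delicate heart'' you isolated, and it is a genuine obstruction, not a technicality. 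The paper's own one-sentence proof (``easily deduced from the fact that $\mathfrak{E}$ and $\mathbb{Z}$ are integrally closed'') silently skips the same rationality step, so your attempt fails at the same place the published argument does; the most one can salvage along these lines is the weaker (and true) statement that an entire function which is integral over $\mathfrak{A}$ and takes rational values on $\mathbb{Z}$ must lie in $\mathfrak{A}$. Your localization alternative cannot rescue matters either, since if every $\mathfrak{A}_{M}$ were integrally closed then so would be $\mathfrak{A}=\bigcap_{M}\mathfrak{A}_{M}$, contradicting the example above.
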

\begin{proof}
Let us observe that   the rings $\mathfrak{A}$ and $\mathfrak{E}$ have the same fractions field, namely the field of the meromorphic functions,
since   for $f,g\in \mathfrak{E}$ we can write
$$\frac{f}{g}(z)=\frac{f(z)\sin(\pi z)}{g(z)\sin(\pi z)}.$$
Now, $\mathfrak{A}$ is integrally closed  can be easily deduced   from  fact that  $\mathfrak{E}$  and $\mathbb{Z}$ are   integrally closed ring (  gcd-domains ).
\end{proof}
\begin{thm}
 The ring $\mathfrak{A}$  is nearly analytic,  namely that\\
i) if $M $ is a maximal ideal of   $\mathfrak{A}$ then its powers $M^n$
for $n = 1, 2, . . . $ are distinct,\\
ii) $P^*=\cap_{n=1}^\infty M^n$ is a prime ideal of $\mathfrak{A}$  and is the largest non-maximal prime ideal  contained in M.
\end{thm}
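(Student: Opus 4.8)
My plan is to reduce everything to the localization $\mathfrak{A}_M$ at the maximal ideal $M$, and the key point — which I expect to be the only genuine obstacle — is to prove that $M\mathfrak{A}_M$ is a \emph{principal} ideal of $\mathfrak{A}_M$. Precisely, fix $f_0\in M$ with $m(f_0)=1$ (such an $f_0$ exists by Theorem~\ref{th2}); note $m(f_0)=1$ forces $f_0(n)\in\mathbb{Z}\setminus\{0\}$ for every $n\in\mathbb{Z}$, since $f_0(n)$ must be a squarefree integer. I claim the image of $f_0$ generates $M\mathfrak{A}_M$. Indeed, given $g\in M$, Theorem~\ref{th1} applies to $(f_0,g)$ — the hypothesis is automatic, taking $j_n=1$, because $f_0(n)$ is a nonzero integer — so $(f_0,g)=(d)$ for some common divisor $d$ of $f_0$ and $g$, and $d\in M$. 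Write $f_0=d\cdot(f_0/d)$ with $f_0/d\in\mathfrak{A}$. If $f_0/d$ lay in $M$, then $d$ and $f_0/d$ would be two elements of the proper ideal $M$; they could not be coprime (share no irreducible divisor), for otherwise their zero sets are disjoint and their values at integers coprime, so Lemma~\ref{l2} would put $1\in M$; hence $d$ and $f_0/d$ share an irreducible divisor $\delta$, whence $\delta^2\mid f_0$, contradicting $m(f_0)=1$. So $f_0/d\notin M$ is a unit of $\mathfrak{A}_M$, giving $g\in(d)\subseteq f_0\mathfrak{A}_M$; varying $g$ yields $M\mathfrak{A}_M=f_0\mathfrak{A}_M$.

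With this in hand, write $\mathfrak{m}=M\mathfrak{A}_M=(\pi)$, where $\pi$ is a non-unit of the local domain $\mathfrak{A}_M$, and invoke the elementary structure of a local domain with principal maximal ideal: the powers $\mathfrak{m}^k=(\pi^k)$ are pairwise distinct (else $\pi$ would be a unit); every nonzero element is either of the form $u\pi^s$ with $u$ a unit and $s<\infty$, or else lies in $I:=\bigcap_{k\ge1}\mathfrak{m}^k$; consequently the $\pi$-order is well defined and additive off $I$, so $I$ is prime; and $I$ is the unique prime strictly contained in $\mathfrak{m}$ — a prime $\mathfrak{q}\subsetneq\mathfrak{m}$ cannot contain any $u\pi^s$ of finite order, so $\mathfrak{q}\subseteq I$, while $\pi\in\mathfrak{m}\setminus I$ shows $I\subsetneq\mathfrak{m}$.

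Finally I descend to $\mathfrak{A}$. Since $M$ is maximal, each $M^k$ is $M$-primary, hence contracted from $\mathfrak{A}_M$: $M^k=\mathfrak{m}^k\cap\mathfrak{A}$. For (i), the element $f_0^k\in\mathfrak{A}$ maps to $\pi^k\in\mathfrak{m}^k\setminus\mathfrak{m}^{k+1}$, so $f_0^k\in M^k\setminus M^{k+1}$ and the powers $M^k$ are all distinct. For (ii), $P^*=\bigcap_k M^k=\bigl(\bigcap_k\mathfrak{m}^k\bigr)\cap\mathfrak{A}=I\cap\mathfrak{A}$ is prime as the contraction of the prime $I$; it lies in $M$, is strictly smaller (since $f_0\in M\setminus M^2\subseteq M\setminus P^*$), hence is non-maximal; and if $Q\subseteq M$ is any non-maximal prime of $\mathfrak{A}$, then $Q\subsetneq M$, so $Q\mathfrak{A}_M\subsetneq\mathfrak{m}$, whence $Q\mathfrak{A}_M\subseteq I$ and $Q=Q\mathfrak{A}_M\cap\mathfrak{A}\subseteq P^*$ — so $P^*$ is the largest non-maximal prime contained in $M$. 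Everything after the principality of $M\mathfrak{A}_M$ is routine localization and primary-ideal bookkeeping, so that lemma is where the content of the proof lies.
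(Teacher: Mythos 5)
Your proposal is correct, but it takes a genuinely different route from the paper's. You localize at $M$ and make the whole theorem rest on one lemma: $M\mathfrak{A}_M$ is principal, generated by the image of an $f_0\in M$ with $m(f_0)=1$. Your verification of that lemma is sound — Theorem~\ref{th1} applies to $(f_0,g)$ because $f_0$ cannot vanish on $\mathbb{Z}$, and $f_0/d\notin M$ because two elements of the proper ideal $M$ must share an irreducible divisor (else Lemma~\ref{l2} puts $1\in M$), which would contradict $m(f_0)=1$ — and the remaining steps (powers of a principal maximal ideal in a local domain are distinct, their intersection is prime and is the largest prime strictly below the maximal ideal, and $M^k$ is $M$-primary hence contracted, so everything descends to $\mathfrak{A}$) are standard and correctly executed. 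The paper instead works entirely inside $\mathfrak{A}$: using Corollary~\ref{cgd} with an $h\in M$ not vanishing on $\mathbb{Z}$ it shows every $g\in M^n$ can be written $ud^n$ with $d\in M$, deduces $f^n\in M^n\setminus M^{n+1}$ when $m(f)=1$, proves primality of $P^*$ by comparing such representations and cancelling in the integral domain, and obtains the maximality statement by proving $MP=P$ for every non-maximal prime $P\subseteq M$, whence $P\subseteq M^n$ for all $n$. Both arguments draw on the same core inputs (Theorem~\ref{th1}, Lemma~\ref{l2}, an element of $M$ with $m=1$); your packaging is sharper and delegates the bookkeeping to general commutative algebra, so (i) and (ii) fall out at once, while the paper's direct approach avoids localization and yields in passing the explicit representation $g=ud^n$ and the identity $MP=P$.
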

 \begin{proof}
i) We proceed as in \cite{MH}. First we note  that if $g\in M^n$  for some positive integer $n$ then  $g=ud^n$,  where  $u\in  \mathfrak{A}$ and $d\in M$.  This  can be  follow by recurrence on $n$. In fact, let
 $g\in  M^{n+1}=MM^n$, which can be written as   a finite sum of the form  $ \sum_i a_ib_i^n$  with $a_i,b_i \in M$.  Choose a function  $h\in M $ that does not vanish on $\mathbb{Z}$ and   let $(d)=(a_1,a_2,,,b_1,b_2,...,h)$, it follows that   $d \in M $  and  hence   $g$ takes the form  $g=ud^{n+1}$.
 Now using this fact we deduce that  for   $f\in M$ with $m(f)=1$   we have   $f^n\in M^n\backslash M^{n+1}$, for all positive integer $n$.
\\ ii) Suppose we have $f_1f_2\in P^*$. If $f_1\notin M$ (or $f_2\notin M)$ then there are $\alpha\in \mathfrak{A}$ and $m\in M$ such that  $\alpha f_1+m=1$
 and one  can write  $f_2= f_1f_2+mf_2$,  from which   $f_2\in P^*$.
If both $f_1,f_2\in M\setminus P^*$ then there exist  a positive integers $r,s$  such that  $f_1\in M^r\setminus M^{r+1}$ and    $f_2\in M^s\setminus M^{s+1}$, and one write 
$f_1=u(f_1^*)^r$ and $f_2=v(f_2^*)^s$ where $u,v\in \mathfrak{A}\setminus M$ and $f_1^*,f_2^*\in M\setminus M^2$. On the other hand ,  in view  of the corollary \ref{cgd} if we  let $d\in M\setminus M^2$ be a common divisor of $f_1^*$ and $f_2^*$ then we must have
$f_1f_2= wd^{r+s}$ where  $w\in \mathfrak{A}\setminus M$. Now as $f_1f_2\in P^*$,  its written as  $f_1f_2= ab^{r+s+1}$  with $b\in M$.
Letting $\delta\in M$   be a common divisor of $d$ and $b$,    we have    $f_1f_2= \alpha \delta^{r+s}=\beta \delta^{r+s+1}$ with
 $\alpha\in \mathfrak{A}\setminus M$,   but this does not carry out since  $\mathfrak{A}$ is an  integral domain, 
and  thus    $f_1$ or $f_2$ must be in $P^*$.  Hence  we conclude that $P^*$ is a prime ideal.
\par In order to prove  the second part of the theorem,  we let  $P$ be a non-maximal prime ideal contained in $M$ and  $f\in M$ with $m(f)=1$.
Let $g\in P$ and $d\in M$ be a common divisor of $f$ and $g$, where we  write $f=df^*$ and $g=dg^*$. Since $m(d)=1$ then $d\notin P$ and  thus
$g^*\in P$.   Hence we conclude  that $ MP= P$ and    this fact  lead to the inclusion of  $P$ in $ P^*$.
\end{proof}
In the next we  let $M$ be a proper  maximal ideal of $\mathfrak{A}$. Following \cite{MH},  for  $f,g\in M$  we have
    $f \geq  g $  if  there is $e\in M$ such that
$$\Pi(e)\subset \Pi(f)\cap \Pi(g)
 \quad\text{and}\quad o( f,\delta)\geq o(g,\delta), \qquad \text{ for all } \; \delta\in \Pi(e).$$
  We denote  $f\gg g$ if $f\geq g^N$ \  all positive integers $N$.
\begin{rem}\label{rem}
  The function $e$ can be taken with $m(e)=1$. Indeed, let $h\in M$ with  $m(h)=1$ and $d\in M$ such that $(e,h)=(d)$. 
Then  $\Pi(d)\subset \Pi(e)$ and  $m(d)=1$. 
\end{rem}
\begin{lem}\label{fg}
  If $f,g\in M$ then $f\geq g$ or $g\geq f$.
\end{lem}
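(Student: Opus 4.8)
The plan is to produce, for given $f,g\in M$, a single witness $e\in M$ whose irreducible divisors all divide both $f$ and $g$ and on which one of the two order inequalities holds throughout; primality of $M$ will then decide whether $f\ge g$ or $g\ge f$.

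First I would fix $h\in M$ with $m(h)=1$ which does not vanish on $\mathbb{Z}$ (such an element exists, as used in the proof of Corollary~\ref{cgd}), and apply Corollary~\ref{cgd} to the family $h,f,g$ to get a common divisor $d\in M$. Since $d\mid h$, every $\delta\in\Pi(d)$ satisfies $o(\delta,d)=1$; hence all zeros of $d$ are simple and lie in $\mathbb{C}\setminus\mathbb{Z}$ (as $d\mid h$ and $h$ has no integer zero), and $d(n)$ is square-free for every $n\in\mathbb{Z}$. From $d\mid f$ and $d\mid g$ we get $\Pi(d)\subseteq\Pi(f)\cap\Pi(g)$. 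I would then record the two bookkeeping facts underlying everything, both coming from the classification in Proposition~\ref{p1} and the uniqueness up to units of the irreducible factors: writing $\delta_a$ for the irreducible function vanishing simply at $a\in\mathbb{C}\setminus\mathbb{Z}$, one has $o(\delta_a,f)=$ the multiplicity of $a$ as a zero of $f$; and writing $\delta_{n,p}$ for the irreducible function with $\delta_{n,p}(n)=p$ and $\delta_{n,p}=\pm1$ on $\mathbb{Z}\setminus\{n\}$, one has $o(\delta_{n,p},f)=v_p(f(n))$. In particular $o(\delta_a,f),o(\delta_a,g)\ge1$ for $a\in Z(d)$, and $v_p(f(n)),v_p(g(n))\ge1$ whenever the prime $p$ divides $d(n)$.

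Next I would split $d$ according to which of $f,g$ dominates. Put $A_1=\{a\in Z(d): o(\delta_a,f)\ge o(\delta_a,g)\}$ and $A_2=Z(d)\setminus A_1$, and for each $n$ partition the set of primes dividing $d(n)$ into $P_1(n)=\{p: v_p(f(n))\ge v_p(g(n))\}$ and its complement $P_2(n)$. By Weierstrass' theorem together with Corollary~\ref{cor2.2} I would construct $e_1,e_2\in\mathfrak{A}$ having simple zeros exactly on $A_1$, $A_2$ respectively, and satisfying $e_1(n)=\prod_{p\in P_1(n)}p$, $e_2(n)=\varepsilon_n\prod_{p\in P_2(n)}p$ with signs $\varepsilon_n=\pm1$ chosen so that $e_1(n)e_2(n)=d(n)$; this is possible precisely because $d(n)$ is square-free. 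Then $e_1e_2$ has the same simple zeros as $d$ and agrees with $d$ on $\mathbb{Z}$, so $e_1e_2/d$ is a unit and $e_1e_2\in d\mathfrak{A}\subseteq M$. Since $M$ is prime, $e_1\in M$ or $e_2\in M$. In the first case $\Pi(e_1)\subseteq\Pi(d)\subseteq\Pi(f)\cap\Pi(g)$, and by construction $o(\delta,f)\ge o(\delta,g)$ for every $\delta\in\Pi(e_1)$ (its divisors being the $\delta_a$ with $a\in A_1$ and the $\delta_{n,p}$ with $p\in P_1(n)$), so $e_1$ witnesses $f\ge g$; the case $e_2\in M$ yields $g\ge f$ symmetrically.

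I expect the main obstacle to be purely bookkeeping: justifying $\Pi(e_1)\subseteq\Pi(d)$ and the identifications of $o(\delta_a,f)$ with the zero-multiplicity and of $o(\delta_{n,p},f)$ with $v_p(f(n))$ from Proposition~\ref{p1}, and checking that the prescribed $e_1,e_2$ genuinely lie in $\mathfrak{A}$ and multiply back to a unit times $d$. No deeper difficulty is anticipated: once the splitting $d=e_1e_2$ (up to a unit) is in hand, the conclusion is immediate from primality of $M$, so the essential content is Corollary~\ref{cgd} together with the existence of an $m=1$ element in $M$.
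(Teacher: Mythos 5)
Your proposal is correct and follows essentially the same route as the paper: take $h\in M$ with $m(h)=1$, pass to a common divisor $d\in M$ of $h,f,g$ (so $m(d)=1$), split $d$ up to a unit into a factor supported on the irreducible divisors where $f$ dominates and one where $g$ dominates, and let primality of $M$ select which factor lies in $M$. Your explicit Weierstrass/Corollary~\ref{cor2.2} construction of $e_1,e_2$ is just a detailed realization of the paper's divisor $v$ of $d$ with $\Pi(v)=\{\delta\in\Pi(d):\ o(f,\delta)\geq o(g,\delta)\}$, and it even absorbs the paper's separate empty-set case, since then $e_1$ is a unit and $e_2\in M$ automatically.
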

\begin{proof}
  Let $h\in M$ with $m(h)=1$ and $d\in M$ such that $(f,g,h)=(d)$. Clearly f $\{\delta \in \Pi(d);\; o(f,\delta)\geq o(g,\delta)\} =\emptyset$  then $g\geq f$. Otherwise, let $v\in \mathfrak{U}$ be a divisor of $d$ such that $\Pi(v)=\{\delta \in \Pi(d);\; o(f,\delta)\geq o(g,\delta)\} $ and
$o(v,\delta)=o(d,\delta)$ for all $\delta\in \Pi(v)$. If $v\in M$ then clearly  $f\geq g$. If $v\notin M$ then $d/v\in M $ and we must have
$g\geq f$.
\end{proof}
\begin{lem}\label{le}
If $f_1, f_2 , g\in M$ such that $f_1f_2\geq g^{2N}$ for some  positive integer $N$, then we have  $f_1\geq g^N$ or $f_2\geq g^N$.
\end{lem}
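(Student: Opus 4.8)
The plan is to deduce the lemma from transitivity of the relation $\geq$ on $M$, together with two elementary manipulations of exponents. The key preliminary fact I would record is that, since every $\delta\in\Pi(\varphi)$ is prime, the order function is additive: $o(\varphi\psi,\delta)=o(\varphi,\delta)+o(\psi,\delta)$, and in particular $\Pi(\varphi^k)=\Pi(\varphi)$ and $o(\varphi^k,\delta)=k\,o(\varphi,\delta)$ for every positive integer $k$. Unwinding the definition, this gives the reformulation I will use repeatedly: $\varphi^r\geq\psi^s$ holds precisely when there is $e\in M$ with $\Pi(e)\subset\Pi(\varphi)\cap\Pi(\psi)$ and $r\,o(\varphi,\delta)\geq s\,o(\psi,\delta)$ for all $\delta\in\Pi(e)$.

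Next I would prove that $\geq$ is transitive on $M$: if $e_1\in M$ witnesses $\varphi\geq\psi$ and $e_2\in M$ witnesses $\psi\geq\chi$, then Corollary \ref{cgd} supplies a common divisor $d\in M$ of $e_1$ and $e_2$, so $\Pi(d)\subseteq\Pi(e_1)\cap\Pi(e_2)$; for $\delta\in\Pi(d)$ one reads off $\delta\in\Pi(\varphi)\cap\Pi(\chi)$ and $o(\varphi,\delta)\geq o(\psi,\delta)\geq o(\chi,\delta)$, so $d$ witnesses $\varphi\geq\chi$.

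With these in hand the lemma is short. By Lemma \ref{fg} we may assume $f_1\geq f_2$ (the case $f_2\geq f_1$ being symmetric); if $e\in M$ witnesses this, then on $\Pi(e)$ we have $o(f_1^2,\delta)=2\,o(f_1,\delta)\geq o(f_1,\delta)+o(f_2,\delta)=o(f_1f_2,\delta)$, while $\Pi(e)\subset\Pi(f_1)\subset\Pi(f_1f_2)$, so $e$ witnesses $f_1^2\geq f_1f_2$. Combining with the hypothesis $f_1f_2\geq g^{2N}$ via transitivity yields $f_1^2\geq g^{2N}$, and by the reformulation above this is exactly the assertion that some $e'\in M$ satisfies $\Pi(e')\subset\Pi(f_1)\cap\Pi(g)$ and $o(f_1,\delta)\geq N\,o(g,\delta)$ on $\Pi(e')$ — that is, $f_1\geq g^N$.

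I do not expect a real obstacle here; the two points needing care are (i) that the common divisor $d$ in the transitivity step genuinely lies in $M$, which is precisely Corollary \ref{cgd}, and (ii) the additivity of $o(\cdot,\delta)$ at a prime $\delta$, which is routine. If one prefers to stay closer to the method of Lemma \ref{fg}, there is an alternative route: choose a witness $e$ for $f_1f_2\geq g^{2N}$ with $m(e)=1$ by Remark \ref{rem}, apply the pigeonhole principle to $o(f_1,\delta)+o(f_2,\delta)\geq 2N\,o(g,\delta)$ to split $\Pi(e)$ into the set where $o(f_1,\delta)\geq N\,o(g,\delta)$ and its complement, realize in $\mathfrak{A}$ the divisor $v\mid e$ supported on the first set (possible because $m(e)=1$, via Corollary \ref{cor2.2}), and use primality of $M$ to conclude $v\in M$ or $e/v\in M$; the corresponding conclusion $f_1\geq g^N$ or $f_2\geq g^N$ is then immediate. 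I would present the first, shorter route.
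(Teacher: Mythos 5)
Your argument is correct, but it follows a genuinely different route from the paper. The paper proves the lemma directly by a splitting (pigeonhole) argument on the support of a single witness: it takes a witness $e$ for $f_1f_2\geq g^{2N}$ with $m(e)=1$ (Remark \ref{rem}), splits $\Pi(e)$ into the set $A$ where $o(f_1,\delta)\geq o(g^N,\delta)$ and its complement, realizes a divisor $d$ of $e$ with $\Pi(d)=A$, and uses the maximality (primeness) of $M$ to conclude $d\in M$ or $e/d\in M$, which yields $f_1\geq g^N$ or $f_2\geq g^N$ respectively --- this is exactly the ``alternative route'' you sketch at the end. Your primary route instead first proves transitivity of $\geq$ via Corollary \ref{cgd}, invokes Lemma \ref{fg} to compare $f_1$ and $f_2$, and then does exponent arithmetic ($f_1\geq f_2$ gives $f_1^2\geq f_1f_2\geq g^{2N}$, hence $f_1\geq g^N$ after halving the orders). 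The arithmetic steps are legitimate because irreducibles of $\mathfrak{A}$ are prime (so $o(\cdot,\delta)$ is additive and $\Pi(\varphi^k)=\Pi(\varphi)$), and all the intermediate elements $f_1^2$, $f_1f_2$, $g^{2N}$, $g^N$ lie in $M$, so the relation is always applied where it is defined; the common divisor in the transitivity step lies in $M$ precisely by Corollary \ref{cgd}, as you note. What your approach buys is that it avoids constructing a divisor of $e$ with prescribed support (and hence the need for $m(e)=1$ within this proof), at the cost of two auxiliary facts (transitivity and the totality statement of Lemma \ref{fg}); the paper's approach is more self-contained at the level of this one lemma and works with a single witness. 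One cosmetic caveat: orders $o(f,\delta)$ may be infinite in $\mathfrak{A}$ (e.g.\ a type (i) irreducible attached to an integer where $f$ vanishes divides every power of $f$), so your additivity and halving steps should be read with the usual conventions for $\infty$, under which they remain valid.
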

\begin{proof}
Let   $e\in M$ with $m(e)=1 $, such that $\Pi(e)\subset \Pi(f_1f_2)\cap \Pi(g)$
 and
$o( f_1f_2,\delta)\geq o(g^{2N},\delta)$,  for all $\delta\in \Pi(e).$
Assume that 
$A = \{\delta\in \Pi(e); \; o( f_1,\delta)  \geq  o(g^N,\delta)\} \neq \emptyset$
and let  $d \in \mathfrak{U}$ be a divisor of $e$, so that $\Pi(d)=A$.
  Therefore, when   $d\in M$  we have that $f_1\geq g^N$, otherwise we have $e/d\in M$ and
  $f_2\geq g^N$, noting here that $\Pi(e/d)=\{\delta\in \Pi(e); \; o( f_2,\delta)  >  o(g^N,\delta)\}$. This completes the proof  and the lemma
\end{proof}
\begin{thm}\label{pom}
  Let $\Omega$ a subset of $M$ and
$$P_\Omega=\{f\in M;\; f\gg g\; \text{for all} \; g\in \Omega\}.$$
Then  $P_\Omega$ is a prime ideal.
\end{thm}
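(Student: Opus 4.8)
The plan is to verify, in turn, that $P_\Omega$ is an additive subgroup of $M$, that it absorbs multiplication by $\mathfrak{A}$, that it is proper, and finally that it is prime; the recurring tools are Corollary~\ref{cgd} (a finite family in $M$ has a common divisor in $M$), Lemma~\ref{fg} (the relation $\geq$ totally preorders $M$), and Lemma~\ref{le}. One may assume $\Omega\neq\emptyset$, since $P_\emptyset=M$ is prime. I would first record four elementary facts about $\geq$ and $\gg$ on $M$, each obtained by a short manipulation of witnesses. (a) The relation $\geq$ is transitive: if $a\geq b$ and $b\geq c$ with witnesses $e_1,e_2$, then a common divisor $e\in M$ of $e_1,e_2$ (Corollary~\ref{cgd}) satisfies $\Pi(e)\subseteq\Pi(a)\cap\Pi(c)$ and $o(a,\delta)\geq o(b,\delta)\geq o(c,\delta)$ for $\delta\in\Pi(e)$. (b) $f\geq g^{k+1}$ implies $f\geq g^{k}$ (the same witness serves), so $f\not\geq g^{N}$ forces $f\not\geq g^{k}$ for every $k\geq N$. (c) $g\geq h$ implies $g^{N}\geq h^{N}$ for every $N$ (the same witness again, since multiplying by $N$ multiplies the relevant orders by $N$), so by (a), $f\gg g$ together with $g\geq h$ gives $f\gg h$. (d) If $f\mid w$ with $f,w\in M$ then $w\geq f$ (take $e=f$), so ``$f\gg g$'' is inherited by any multiple of $f$ lying in $M$.

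From (d) it follows that $0\in P_\Omega$ (recall $o(0,\delta)=\infty$) and that $af\in P_\Omega$ whenever $f\in P_\Omega$ and $a\in\mathfrak{A}$ (note $af\in M$). For additive closure, take $f_1,f_2\in P_\Omega$, $g\in\Omega$, $N\geq1$, and witnesses $e_1,e_2\in M$ for $f_1\geq g^{N}$ and $f_2\geq g^{N}$. A common divisor $e\in M$ of $e_1,e_2$ satisfies $\Pi(e)\subseteq\Pi(e_1)\cap\Pi(e_2)\subseteq\Pi(g)$, and for $\delta\in\Pi(e)$ both $o(f_1,\delta)$ and $o(f_2,\delta)$ are at least $N\,o(g,\delta)\geq1$, whence $o(f_1+f_2,\delta)\geq\min\{o(f_1,\delta),o(f_2,\delta)\}\geq N\,o(g,\delta)$ and in particular $\delta\in\Pi(f_1+f_2)$; thus $e$ witnesses $f_1+f_2\geq g^{N}$. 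As $g,N$ are arbitrary and $f_1+f_2\in M$, we get $f_1+f_2\in P_\Omega$, so $P_\Omega$ is an ideal, and it is proper because $1\notin M\supseteq P_\Omega$.

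For primality, suppose $f_1f_2\in P_\Omega$ (hence $f_1f_2\in M$) while $f_1\notin P_\Omega$ and $f_2\notin P_\Omega$, and derive a contradiction. Since $M$ is prime, at least one $f_i$ lies in $M$; consider first the main case $f_1,f_2\in M$. Then there are $g_1,g_2\in\Omega$ and integers $N_1,N_2$ with $f_1\not\geq g_1^{N_1}$ and $f_2\not\geq g_2^{N_2}$, so by (b) $f_1\not\geq g_1^{k}$ for $k\geq N_1$ and $f_2\not\geq g_2^{k}$ for $k\geq N_2$. Since $f_1f_2\gg g_1$, we have $f_1f_2\geq g_1^{2k}$ for every $k$, so Lemma~\ref{le} gives $f_1\geq g_1^{k}$ or $f_2\geq g_1^{k}$; for $k\geq N_1$ the first fails, hence $f_2\geq g_1^{k}$, and as $k$ varies, $f_2\gg g_1$. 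Symmetrically $f_1\gg g_2$. By Lemma~\ref{fg}, $g_1\geq g_2$ or $g_2\geq g_1$; in the first case (c) upgrades $f_2\gg g_1$ to $f_2\gg g_2$, contradicting $f_2\not\geq g_2^{N_2}$, and in the second (c) upgrades $f_1\gg g_2$ to $f_1\gg g_1$, contradicting $f_1\not\geq g_1^{N_1}$. This contradiction settles the main case.

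It remains to handle the case in which, say, $f_1\notin M$; then $f_2\in M$ (as $M$ is prime), and one must show $f_2\in P_\Omega$. Here no irreducible divisor $\delta$ of $f_1$ lies in $M$ (else $f_1\in(\delta)\subseteq M$), so $o(f_1,\delta)=0$ and $o(f_1f_2,\delta)=o(f_2,\delta)$ for every irreducible $\delta\in M$. Fix $g\in\Omega$ and $N\geq1$ and take a witness $e\in M$ with $m(e)=1$ for $f_1f_2\geq g^{N}$; write $e=e_Se_T$ with $\Pi(e_S)=\{\delta\in\Pi(e):\delta\mid f_1\}$ and $\Pi(e_T)=\Pi(e)\setminus\Pi(e_S)$. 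Since $e\in M$ and $M$ is prime, $e_S\in M$ or $e_T\in M$; if $e_T\in M$, then every $\delta\in\Pi(e_T)$ satisfies $\delta\nmid f_1$, so $o(f_2,\delta)=o(f_1f_2,\delta)\geq N\,o(g,\delta)\geq1$, giving $\Pi(e_T)\subseteq\Pi(f_2)\cap\Pi(g)$ and hence $f_2\geq g^{N}$; as $g,N$ were arbitrary this yields $f_2\in P_\Omega$. The remaining possibility $e_S\in M$ would force every irreducible factor of the nonzero element $e_S\in M$ to divide $f_1$ and so to lie outside $M$; excluding this --- that is, showing that a nonzero element of a proper maximal ideal of $\mathfrak{A}$ always has an irreducible divisor in that ideal, or, more flexibly, that the witness $e$ may be trimmed (using Corollary~\ref{cgd} and the fact proved earlier that $M$ contains a function not vanishing on $\mathbb{Z}$) to a divisor lying in $M$ none of whose irreducible factors divides $f_1$ --- is the point I expect to require the most care, the manipulations in the other paragraphs being routine once (a)--(d) are available.
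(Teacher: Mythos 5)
Your overall architecture is fine: facts (a)--(d) are correct consequences of Corollary~\ref{cgd}, the ideal verification and properness are correct, and your treatment of the case $f_1,f_2\in M$ is a legitimate (merely repackaged) version of the paper's argument -- the paper first proves each $P_g$ is prime via Lemma~\ref{le} and then intersects using Lemma~\ref{fg}, while you run Lemma~\ref{le} and Lemma~\ref{fg} directly on $P_\Omega$; the content is the same. The genuine gap is exactly where you flag it, the case $f_1\notin M$, and your first proposed repair cannot work: it is \emph{false} that a nonzero element of a proper maximal ideal always has an irreducible divisor in that ideal. For instance, let $M$ contain the ideal $I$ of the paper's closing example; the constant $p$ lies in $M$, its irreducible divisors are the functions $\delta_{n,p}$, and no $\delta_{n,p}$ can lie in $M$: the function $u_n\in I\subseteq M$ with $u_n(m)=1$ for $|m|\le n$ and $u_n(m)=p$ for $|m|>n$ satisfies $\Pi(\delta_{n,p})\cap\Pi(u_n)=\emptyset$, so Lemma~\ref{l2} would give $(\delta_{n,p},u_n)=\mathfrak{A}$, contradicting properness of $M$. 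So ruling out $e_S\in M$ needs more than the existence of irreducible divisors.

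The missing ingredient is the maximality of $M$ applied to $f_1\notin M$: write $1=\alpha f_1+\omega$ with $\alpha\in\mathfrak{A}$, $\omega\in M$; then $\Pi(\omega)\cap\Pi(f_1)=\emptyset$, since a common irreducible divisor would divide $1$. This closes the case in either of two ways. The paper's way: if $e_S\in M$, then $\Pi(e_S)\subseteq\Pi(f_1)$ forces $\Pi(e_S)\cap\Pi(\omega)=\emptyset$, hence (by the remark following the definition of $m(f)$) $Z(e_S)\cap Z(\omega)=\emptyset$ and $\gcd(e_S(n),\omega(n))=1$ for all $n$, so Lemma~\ref{l2} gives $(e_S,\omega)=\mathfrak{A}$ with both generators in the proper ideal $M$, a contradiction; thus $e_T\in M$ and your computation finishes. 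Alternatively, your ``trimming'' idea made precise -- which also spares you justifying the factorization $e=e_Se_T$ -- is: take, by Corollary~\ref{cgd}, a common divisor $d\in M$ of $e$ and $\omega$; then $\Pi(d)\subseteq\Pi(e)\cap\Pi(\omega)$, so no $\delta\in\Pi(d)$ divides $f_1$, whence $o(f_2,\delta)=o(f_1f_2,\delta)\ge o(g^N,\delta)$ for all $\delta\in\Pi(d)$ and $d$ itself witnesses $f_2\ge g^N$. Note that the fact you cite (``$M$ contains a function not vanishing on $\mathbb{Z}$'') is not what does the work here; it is the Bezout relation coming from maximality. With that inserted, your proof is complete and is essentially the paper's.
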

\begin{proof}
We   prove first  that $P_g$, $g\in M$ is  an  ideal and its prime. Let $N$ be a positive integer.
For  $f_1,f_2 \in P_g$,   there exist  $e_1,e_2\in M$ such that we have the following,
$$\Pi(e_1)\subset \Pi(f_1)\cap \Pi(g),\;  \Pi(e_2)\subset \Pi(f_2)\cap \Pi(g),\;\;
 o( f_1,\delta_1)\geq o(g^N,\delta_1),$$
$$   o( f_2,\delta_2)\geq o(g^N,\delta_2)$$
  for all  $\delta_1\in \Pi(e_1)$ and  $\delta_2\in \Pi(e_2)$.
 Let $h\in M$ be a function that does not vanish on $\mathbb{Z}$ and let $d\in M$ such that $(d)=(e_1,e_2, h)$.  Clearly
$$\Pi(d)\subset \Pi(f_1)\cap \Pi(f_2)\cap\Pi(g)\subset \Pi(f_1+f_2) \cap\Pi(g)$$
and for $\delta\in \Pi(d),$
$$o( f_1+f_2,\delta)\geq \min(o( f_1,\delta),o( f_2,\delta))\geq o(g^N,\delta)$$
 Hence we have that $f_1+f_2 \in P_g$. Similarly one can see that for $f_1\in P_g$ and $f_2\in \mathfrak{A}$  the product $f_1f_2\in P_g.$  We thus  conclude that    $P_g$ is an ideal.
 \par Now,  we  show  that $P_g$ is prime.   Assume that   $f_1f_2\in P_g$, for $f_1, f_2\in  \mathfrak{A}$. In particular $f_1f_2\in M$ and then  $f_1$ or $f_2$ must be in $M$. If first both $f_1$  and $f_2$ belong to $M$ then by  Lemma \ref{le} one has
 $$f_1\geq g^N\quad \text{or}\quad f_2\geq g^N$$
for all positive integers $N$. As  at least one of the sets $\{N; \; f_1\geq g^N\}$ and $\{N; \; f_2\geq g^N\}$ has infinitely many positive integers $N$
then we must have $f_1\in P_g$ or $f_2\in P_g$. Otherwise,    suppose that    $f_1\notin M$.  Let  $N$ be a positive integer and  $e\in M$ with $ m(e)=1$ (see remark \ref{rem}) and   such that
$\Pi(e)\subset \Pi(f_1f_2)\cap \Pi(g)$ and
$$o( f_1f_2,\delta)\geq o(g^N,\delta),\quad \text{for all }\; \delta\in \Pi(e).$$
Putting  $e=e_1e_2$   with  $\Pi(e_1)\cap \Pi(f_1)=\emptyset$ and $\Pi(e_2) \subset \Pi(f_1)$. Clearly we cannot have
 $e_2\in M$, since in case  $M$ contains an element  $\omega$ so that  $\Pi(e_2)\cap \Pi(\omega)=\emptyset$.  Hence  $e_1\in M$ and  as
$\Pi(e_1)\subset \Pi(e)\subset \Pi(f_1f_2)\cap \Pi(g)$ and
$$o( f_2,\delta)=o( f_1f_2,\delta)\geq o(g^N,\delta),\quad \delta\in \Pi(e_1)$$
 then  $f_2\geq g^{N}$. Since  $N$ is arbitrary then $f_2\gg g$.
\par Now we come back  to the ideal  $P_\Omega=\cap_{g\in \Omega} P_g$ and prove that is prime.  If we let
 $f_1, f_2\in M\setminus P_\Omega$  then  there are
$g_1,g_2\in M$ such that $f_1\notin P_{g_1}$ and $f_2\notin P_{g_2}$. But in view of Lemma \ref{fg} we have   $P_{g_1} \subset P_{g_2}$ or $P_{g_2} \subset P_{g_1}$
 which implies that  $f_1.f_2\notin P_{g_1}$ or  $f_1.f_2\notin P_{g_2}$ and hence  $f_1.f_2\notin P_\Omega$. This completes the proof of Theorem \ref{pom}.
\begin{example}
  Let $p$ be a prime integer. Consider the ideal  of functions $f\in \mathfrak{A}$  such the set $\{n\in \mathbb{Z},\; gcd( f(n), p)=1\}$ is finite. 
 Let $M$ be a maximal ideal contains $I$. For a nonnegative integer $k$   taken   $f_k \in M$ such that 
$$f_k(n)= \left\{
            \begin{array}{ll}
              1, & \hbox{ if $|n|\leq k $;} \\
              p^{n^k}, & \hbox{ if $|n|> k $;.}
            \end{array}
          \right.
$$

Then we have $f_{k+1} \gg f_k$ and $P_{f_{k}} \varsubsetneq P_{f_{k+1}} $. The Ring   $\mathfrak{A}$  has infinite Krull dimension.
\end{example}

\end{proof}

\end{document}